\providecommand{\U}[1]{\protect\rule{.1in}{.1in}}
\theoremstyle{plain}
\newtheorem{definition}{Definition}[section]
\newtheorem{theorem}{Theorem}
\theoremstyle{definition}
\newtheorem{example}{Example}[section]
\numberwithin{equation}{section}
\numberwithin{theorem}{section}
\begin{document}
\title[Jacobi-Stirling\ Numbers]{The Jacobi-Stirling Numbers}
\author{George E. Andrews}
\address{Department of Mathematics, The Pennsylvania State University, University Park,
PA., 16801}
\email{andrews@math.psu.edu}
\author{Eric S. Egge}
\address{Department of Mathematics, Carleton College, Northfield, MN 55057}
\email{eegge@carleton.edu}
\author{Wolfgang Gawronski}
\address{Department of Mathematics, University of Trier, 54286 Trier, Germany}
\email{gawron@uni-trier.de}
\author{Lance L. Littlejohn}
\address{Department of Mathematics, Baylor University, One Bear Place \#97328, Waco, TX 76798-7328}
\email{Lance\_Littlejohn@baylor.edu}
\date{December 19, 2011}
\subjclass{Primary: 05A05, 05A15, 33C45 Secondary: 34B24, 34L05, 47E05}
\keywords{Jacobi-Stirling numbers, Legendre-Stirling numbers, Stirling numbers, Jacobi
polynomials, left-definite theory}

\begin{abstract}
The Jacobi-Stirling numbers were discovered as a result of a problem involving
the spectral theory of powers of the classical second-order Jacobi
differential expression. Specifically, these numbers are the coefficients of
integral composite powers of the Jacobi expression in Lagrangian symmetric
form. Quite remarkably, they share many properties with the classical Stirling
numbers of the second kind which, as shown in \cite{LW}, are the coefficients
of integral powers of the Laguerre differential expression. In this paper, we
establish several properties of the Jacobi-Stirling numbers and its companions
including combinatorial interpretations thereby extending and supplementing
known contributions to the literature \cite{Andrews-Littlejohn},
\cite{Andrews-Gawronski-Littlejohn}, \cite{Egge}, \cite{Gelineau-Zeng}, and
\cite{Mongelli2}.

\end{abstract}
\maketitle

\section{Introduction}

The Jacobi-Stirling numbers, defined for $n,j\in\mathbb{N}$ by
\begin{equation}%
\genfrac{\{}{\}}{0pt}{}{n}{j}%
_{\alpha,\beta}:=\sum_{r=0}^{j}(-1)^{r+j}\frac{(\alpha+\beta+2r+1)\Gamma
(\alpha+\beta+r+1)\left[  r(r+\alpha+\beta+1)\right]  ^{n}}{r!(j-r)!\Gamma
(\alpha+\beta+j+r+2)}, \label{Jacobi-Stirling number}%
\end{equation}
were discovered in 2007 in the course of the left-definite operator-theoretic
study \cite{EKLWY} of the classical second-order Jacobi differential
expression%
\begin{equation}
\ell_{\alpha,\beta}[y](x)=\lambda y(x)\quad(x\in(-1,1)),
\label{JacobiDEwithlambda}%
\end{equation}
where%

\begin{align}
\ell_{\alpha,\beta}[y](x)  &  :=\dfrac{1}{w_{\alpha,\beta}(x)}\left(  \left(
-(1-x)^{\alpha+1}(1+x)^{\beta+1})y^{\prime}(x)\right)  ^{\prime}%
+k(1-x)^{\alpha}(1+x)^{\beta}y(x)\right) \label{JacobiDE}\\
&  =-(1-x^{2})y^{\prime\prime}(x)+(\alpha-\beta+(\alpha+\beta+2)x)y^{\prime
}(x)+ky(x)\quad(x\in(-1,1)).\nonumber
\end{align}
Here, we assume that $\alpha,\beta>-1$, $k$ is a fixed, non-negative constant,
and $w_{\alpha,\beta}(x)$ is the Jacobi weight function defined by
\begin{equation}
w_{\alpha,\beta}(x):=(1-x)^{\alpha}(1+x)^{\beta}\quad(x\in(-1,1)).
\label{Jacobi weight function}%
\end{equation}

The Jacobi-Stirling numbers are the main subject of this paper; as we will
see, these numbers are similar in many ways to the classical Stirling numbers
of the second kind. Moreover, this manuscript may be viewed as a continuation
of the combinatorial results obtained in \cite{Andrews-Littlejohn},
\cite{Andrews-Gawronski-Littlejohn}, and \cite{Egge}; each of these papers
deals exclusively with various properties of the Legendre-Stirling numbers
$\{PS_{n}^{(j)}\}$, a special case of the Jacobi-Stirling numbers. Indeed, by
definition, $PS_{n}^{(j)}\hspace{-4pt}=%
\genfrac{\{}{\}}{0pt}{}{n}{j}%
_{0,0}.$ The Jacobi-Stirling numbers have generated a significant amount of
interest from other researchers in combinatorics. In this respect, we note
that the Legendre-Stirling numbers appear in some recent work \cite{CKRT}
related to the Boolean number of a Ferrers graph; these authors also show that
there is an interesting connection between Legendre-Stirling numbers and the
Genocchi numbers of the second kind. Our present paper can also be contrasted
with recent work of Gelineau and Zeng \cite{Gelineau-Zeng}, who present an
alternative approach to the combinatorics of the Jacobi-Stirling numbers. In a
parallel development, Mongelli has recently established the total positivity
of the Jacobi-Stirling numbers in \cite{Mongelli1}. In addition, in the recent
manuscript \cite{Mongelli2} Mongelli shows that the Jacobi-Stirling numbers
are specializations of the elementary and complete homogeneous symmetric
functions; he also obtains combinatorial interpretations of a wide class of
numbers which include the Jacobi-Stirling numbers as special cases. We remark
that Comtet \cite{Comtet-1972} further generalized the classical Stirling
numbers in \cite{Comtet-1972}. For an excellent account of Stirling numbers of
the first and second kind, see Comtet's text \cite[Chapter V]{Comtet}.

The contents of this paper are as follows. In Section \ref{Background}, we
briefly review the Jacobi-Stirling numbers from the original context of
left-definite theory. Section \ref{Comparison} deals with a comparison of
various properties of the classical Stirling numbers of the second kind and
the Jacobi-Stirling numbers. The proofs of many of these properties are
similar to the proofs given in \cite{Andrews-Gawronski-Littlejohn} so proofs,
in this paper, will either be brief or omitted completely. In Section
\ref{Main Results1} we give a combinatorial interpretation of the
Jacobi-Stirling numbers in terms of set partitions with a prescribed number of
blocks. In Section \ref{JS1} we study the Jacobi-Stirling numbers of the first
kind and prove several properties of these numbers which are analogues of
properties of the classical Stirling numbers of the first kind. In particular,
we prove a reciprocity result connecting the Jacobi-Stirling numbers and the
Jacobi-Stirling numbers of the first kind. In Section \ref{sec:JS1combin} we
give two combinatorial interpretations of the Jacobi-Stirling numbers of the
first kind in terms of ordered pairs of permutations with prescribed numbers
of cycles.

\textbf{Notation.} It is clear from (\ref{Jacobi-Stirling number}) that $%
\genfrac{\{}{\}}{0pt}{}{n}{j}%
_{\alpha,\beta}$ is a function of $\alpha+\beta+1$, rather than of $\alpha$
and $\beta$ individually. With this in mind, we might wish to follow Gelineau
and Zeng in setting $z=\alpha+\beta+1$. It turns out, however, that it is more
natural for our investigations to view these numbers as a function of $\gamma$
where $\gamma=\dfrac{z+1}{2}.$ With this in mind, we write $%
\genfrac{\{}{\}}{0pt}{}{n}{j}%
_{\gamma}$ to denote the Jacobi-Stirling number $%
\genfrac{\{}{\}}{0pt}{}{n}{j}%
_{\alpha,\beta}$, where $2\gamma-1=\alpha+\beta+1$. With this notational
change, we see from (\ref{Jacobi-Stirling number}) that
\begin{equation}%
\genfrac{\{}{\}}{0pt}{}{n}{j}%
_{\gamma}:=\sum_{r=0}^{j}(-1)^{r+j}\frac{(2r+2\gamma-1)\Gamma(r+2\gamma
-1)\left[  r(r+2\gamma-1)\right]  ^{n}}{r!(j-r)!\Gamma(j+r+2\gamma)}.
\label{Jacobi-Stirling New Notation}%
\end{equation}
Observe that the numbers $%
\genfrac{\{}{\}}{0pt}{}{n}{j}%
_{1}$ (that is, $\gamma=1)$ are precisely the Legendre-Stirling numbers. The
following table lists some Jacobi-Stirling numbers for small values of $n$ and
$j$.%

\[%
\begin{tabular}
[c]{|l|l|l|l|l|l|l|}\hline
$j/n$ & $n=0$ & $n=1$ & $n=2$ & $n=3$ & $n=4$ & $n=5$\\\hline
$j=0$ & $1$ & $0$ & $0$ & $0$ & $0$ & $0$\\\hline
$j=1$ & $0$ & $1$ & $2\gamma$ & $4\gamma^{2}$ & $8\gamma^{3}$ & $16\gamma^{4}%
$\\\hline
$j=2$ & $0$ & $0$ & $1$ & $6\gamma+2$ & $28\gamma^{2}+20\gamma+4$ &
$120\gamma^{3}+136\gamma^{2}+56\gamma+8$\\\hline
$j=3$ & $0$ & $0$ & $0$ & $1$ & $12\gamma+8$ & $100\gamma^{2}+140\gamma
+52$\\\hline
$j=4$ & $0$ & $0$ & $0$ & $0$ & $1$ & $20\gamma+20$\\\hline
$j=5$ & $0$ & $0$ & $0$ & $0$ & $0$ & $1$\\\hline
\end{tabular}
\ \ \
\]

\begin{center}
\textbf{Table 1: Jacobi-Stirling Numbers \medskip}
\end{center}

A similar table, with the Jacobi-Stirling numbers in terms of $\alpha$ and
$\beta,$ can be found in \cite{EKLWY}; also, in \cite{Gelineau-Zeng}, the
authors have a table of Jacobi-Stirling numbers given in terms of $z.$ We show
below, in Theorem \ref{Properties of the Jacobi-Stirling numbers}(iii), that
the Jacobi-Stirling numbers satisfy a certain triangular recurrence relation
that allows for a fast computation of these numbers.

As with the classical Stirling numbers there occur, in a natural way,
corresponding (signless) Jacobi-Stirling numbers $%
\genfrac{[}{]}{0pt}{}{n}{j}%
_{\gamma}$ of the first kind (see Section \ref{JS1} below; see also
\cite{Egge}, \cite{Gelineau-Zeng}, and \cite{Mongelli2}). In view of this, we
occasionally will call $%
\genfrac{\{}{\}}{0pt}{}{n}{j}%
_{\gamma}$ \textit{Jacobi-Stirling numbers of the second kind}.

\section{Background\label{Background}}

We give a brief account in this section on the origin of the Jacobi-Stirling
numbers. They were discovered in a certain spectral study of the Jacobi
differential expression (\ref{JacobiDE}); for specific details see
\cite{EKLWY}.

When%
\[
\lambda=\lambda_{r}=r(r+\alpha+\beta+1)+k\quad(r\in\mathbb{N}_{0}),
\]
one classic solution of the Jacobi differential equation
(\ref{JacobiDEwithlambda}) is
\begin{equation}
y(x)=P_{r}^{(\alpha,\beta)}(x)=\binom{r+\alpha}{r}F(-r,1+\alpha+\beta
+r;1+\alpha;\frac{1-x}{2})\quad(r\in\mathbb{N}_{0}), \label{Jacobi polynomial}%
\end{equation}
where $P_{r}^{(\alpha,\beta)}(x)$ is the Jacobi polynomial of degree $r$ as
defined in the classic text of Szeg\"{o} \cite[Chapter IV]{Szego}.

The most natural setting for an analytic study of (\ref{JacobiDEwithlambda})
is the Hilbert function space
\[
L^{2}((-1,1);w_{\alpha,\beta}(x)):=L_{\alpha,\beta}^{2}(-1,1),
\]
defined by
\begin{equation}
L_{\alpha,\beta}^{2}(-1,1):=\left\{  f:(-1,1)\rightarrow\mathbb{C}%
\ \Bigg|\ f\text{ is Lebesgue measurable and}\int_{-1}^{1}\left\vert
f\right\vert ^{2}w_{\alpha,\beta}dx<\infty\right\}  , \label{L^2-Jacobi}%
\end{equation}
with inner product%
\begin{equation}
(f,g)_{\alpha,\beta}:=\int_{-1}^{1}f(x)\overline{g}(x)w_{\alpha,\beta
}(x)dx\quad(f,g\in L_{\alpha,\beta}^{2}(-1,1)). \label{L^2-JacobiIP}%
\end{equation}
In fact, the Jacobi polynomials $\left\{  P_{r}^{(\alpha,\beta)}\right\}
_{r=0}^{\infty}$ form a complete orthogonal sequence in $L_{\alpha,\beta}%
^{2}(-1,1).$ Furthermore, in this space (called the right-definite spectral
setting), there is a self-adjoint operator $A^{(\alpha,\beta)}$ in
$L_{\alpha,\beta}^{2}(-1,1),$ generated by $\ell_{\alpha,\beta}[\cdot],$ that
has the Jacobi polynomials $\{P_{r}^{(\alpha,\beta)}\}_{r=0}^{\infty}$ as
eigenfunctions. Special cases of these polynomials include the Legendre
polynomials $(\alpha=\beta=0),$ the Tchebycheff polynomials of the first kind
$(\alpha=\beta=-1/2)$, the Tchebycheff polynomials of the second kind
$(\alpha=\beta=1/2),$ and the ultraspherical or Gegenbauer polynomials
$(\alpha=\beta).$

The operator $A^{(\alpha,\beta)}$ is an unbounded operator but it is bounded
below by $kI,$ where $I$ denotes the identity operator, in $L_{\alpha,\beta
}^{2}(-1,1).$ Consequently, for $k>0,$ a general left-definite operator theory
developed by Littlejohn and Wellman \cite{LW} applies. In particular, there is
a continuum of Hilbert spaces $\{H_{t}^{(\alpha,\beta)}\}_{t>0}$ where
$H_{t}^{(\alpha,\beta)}$ is called the $t^{th}$ left-definite Hilbert space
associated with the pair $(L_{\alpha,\beta}^{2}(-1,1),A^{(\alpha,\beta)}).$
From the viewpoint of the general theory of orthogonal polynomials, it is
remarkable that the Jacobi polynomials $\{P_{r}^{(\alpha,\beta)}%
\}_{r=0}^{\infty}$ form a complete orthogonal set in $H_{t}^{(\alpha,\beta)}$
for each $t>0.$ The upshot of the left-definite analysis of (\ref{JacobiDE})
is that, for $n\in\mathbb{N},$ the integral composite power $\ell
_{\alpha,\beta}^{n}[\cdot]$ generates the $n^{th}$ left-definite inner product
$(\cdot,\cdot)_{n}^{(\alpha,\beta)}.$ In \cite{EKLWY}, the authors prove the
following result which is the key prerequisite to establishing the
left-definite theory of the\ Jacobi differential expression; it is in this
result where the Jacobi-Stirling numbers are first introduced.

\begin{theorem}
\label{Powers of Jacobi differential expression}Let $n\in\mathbb{N}.$ The
$n^{th}$ composite power of the Jacobi differential expression $($%
\ref{JacobiDE}$),$ in Lagrangian symmetric form, is given by%
\begin{equation}
\ell_{\alpha,\beta}^{n}[y](x)=\dfrac{1}{w_{\alpha,\beta}(x)}\sum_{j=0}%
^{n}(-1)^{j}\left(  c_{j}^{(\gamma)}(n,k)(1-x)^{\alpha+j}(1+x)^{\beta
+j}y^{(j)}(x)\right)  ^{(j)}\quad(x\in(-1,1)), \label{Powers of Jacobi DE}%
\end{equation}
where the coefficients $c_{j}^{(\gamma)}(n,k)$ $(j=0,1,\ldots n)$ are
nonnegative and given by
\begin{equation}
c_{0}^{(\gamma)}(n,k)=\left\{
\begin{array}
[c]{ll}%
0 & \text{if }k=0\\
k^{n} & \text{if }k>0
\end{array}
\right.  \text{ and }c_{j}^{(\gamma)}(n,k):=\left\{
\begin{array}
[c]{ll}%
\genfrac{\{}{\}}{0pt}{}{n}{j}%
_{\gamma} & \text{if }k=0\\
\sum_{r=0}^{n-j}\binom{n}{r}%
\genfrac{\{}{\}}{0pt}{}{n-r}{j}%
_{\gamma}k^{r} & \text{if }k>0.
\end{array}
\right.  \label{c_j(n,k)}%
\end{equation}
In particular, when $k=0,$
\begin{equation}
\ell_{\alpha,\beta}^{n}[y](x)=\dfrac{1}{w_{\alpha,\beta}(x)}\sum_{j=1}%
^{n}(-1)^{j}\left(
\genfrac{\{}{\}}{0pt}{}{n}{j}%
_{\gamma}(1-x)^{\alpha+j}(1+x)^{\beta+j}y^{(j)}(x)\right)  ^{(j)}\quad
(x\in(-1,1)). \label{Jacobi powers special case}%
\end{equation}
Furthermore, $%
\genfrac{\{}{\}}{0pt}{}{n}{j}%
_{\gamma}$ is the coefficient of $x^{n-j}$ in the Maclaurin series expansion
of
\begin{equation}
\prod_{m=1}^{j}\frac{1}{1-m(m+2\gamma-1)x}\quad\left(  \left\vert x\right\vert
<\frac{1}{j(j+2\gamma-1)}\right)  . \label{Rational Generating Function}%
\end{equation}

\end{theorem}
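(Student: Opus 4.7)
\medskip

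\noindent\textbf{Proof plan.} The plan is to establish the three assertions in order: first the Lagrangian symmetric form (\ref{Powers of Jacobi DE}) by induction on $n$, extracting along the way the recurrence that the coefficients $c_j^{(\gamma)}(n,k)$ must satisfy; then to verify that the closed form (\ref{Jacobi-Stirling number}) satisfies that same recurrence (giving the $k=0$ case (\ref{Jacobi powers special case})); and finally to obtain the rational generating function (\ref{Rational Generating Function}).

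\textbf{Step 1 (Induction for the symmetric form).} The base case $n=1$ is just the definition (\ref{JacobiDE}), with $c_0^{(\gamma)}(1,k)=k$ and $c_1^{(\gamma)}(1,k)=1$. For the inductive step I would write $\ell_{\alpha,\beta}[z]=-\tfrac{1}{w_{\alpha,\beta}}\bigl(w_{\alpha+1,\beta+1}\,z'\bigr)'+kz$ and apply this operator termwise to the assumed expression for $\ell_{\alpha,\beta}^{n}[y]$. The main mechanical point is the identity
\[
\ell_{\alpha,\beta}\!\left[\frac{1}{w_{\alpha,\beta}}\bigl(w_{\alpha+j,\beta+j}\,y^{(j)}\bigr)^{(j)}\right]
=\frac{1}{w_{\alpha,\beta}}\Bigl[\bigl(w_{\alpha+j+1,\beta+j+1}\,y^{(j+1)}\bigr)^{(j+1)}+\bigl(j(j+2\gamma-1)+k\bigr)\bigl(w_{\alpha+j,\beta+j}\,y^{(j)}\bigr)^{(j)}\Bigr],
\]
which reduces to a Leibniz-rule computation on $(1-x^{2})\cdot(w_{\alpha+j,\beta+j}y^{(j)})^{(j+1)}$ after pulling $w_{\alpha+1,\beta+1}/w_{\alpha,\beta}=1-x^{2}$ across the differentiation. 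Collecting coefficients of $(w_{\alpha+j,\beta+j}y^{(j)})^{(j)}/w_{\alpha,\beta}$ on both sides yields the recurrence
\[
c_{j}^{(\gamma)}(n+1,k)=c_{j-1}^{(\gamma)}(n,k)+\bigl[j(j+2\gamma-1)+k\bigr]c_{j}^{(\gamma)}(n,k),
\]
together with the boundary values $c_{0}^{(\gamma)}(n,0)=0$ and $c_{n}^{(\gamma)}(n,k)=1$.

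\textbf{Step 2 (Identifying the coefficients).} When $k=0$ the above recurrence specializes to
\[
\genfrac{\{}{\}}{0pt}{}{n+1}{j}_{\gamma}
=\genfrac{\{}{\}}{0pt}{}{n}{j-1}_{\gamma}
+j(j+2\gamma-1)\genfrac{\{}{\}}{0pt}{}{n}{j}_{\gamma},
\]
which I would verify directly from the closed form (\ref{Jacobi-Stirling number}) by expanding $[r(r+2\gamma-1)]^{n+1}=r(r+2\gamma-1)\cdot[r(r+2\gamma-1)]^{n}$, splitting $r(r+2\gamma-1)=\bigl(r(r+2\gamma-1)-j(j+2\gamma-1)\bigr)+j(j+2\gamma-1)$, and using the telescoping identity $r(r+2\gamma-1)-j(j+2\gamma-1)=(r-j)(r+j+2\gamma-1)$ to absorb the factor into the Gamma-function quotient, thereby shifting $j\mapsto j-1$. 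This pins down $c_{j}^{(\gamma)}(n,0)$ as the Jacobi-Stirling number, and the formula for $c_{j}^{(\gamma)}(n,k)$ in the case $k>0$ then follows by a straightforward binomial expansion of the inhomogeneous recurrence, giving (\ref{c_j(n,k)}) and, in particular, (\ref{Jacobi powers special case}).

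\textbf{Step 3 (Generating function).} For the rational generating function, set $G_{j}(x):=\prod_{m=1}^{j}\bigl(1-m(m+2\gamma-1)x\bigr)^{-1}$ and $F_{j}(x):=\sum_{n\ge j}\genfrac{\{}{\}}{0pt}{}{n}{j}_{\gamma}x^{n-j}$. I would induct on $j$, using the factorization $G_{j}(x)=G_{j-1}(x)\cdot\bigl(1-j(j+2\gamma-1)x\bigr)^{-1}$, equivalently
\[
G_{j}(x)=G_{j-1}(x)+j(j+2\gamma-1)\,x\,G_{j}(x).
\]
Comparing coefficients of $x^{n-j}$ on both sides returns exactly the triangular recurrence of Step 2, and the initial condition $\genfrac{\{}{\}}{0pt}{}{j}{j}_{\gamma}=1$ agrees with the constant term of $G_{j}$, so $F_{j}=G_{j}$. (As a cross-check, a partial-fraction decomposition of $G_{j}$ in $x$ over its simple poles $x=1/(r(r+2\gamma-1))$ for $r=1,\ldots,j$ reproduces the explicit formula (\ref{Jacobi-Stirling number}), with the vacuous $r=0$ term arising from the convention $0^{n}=0$.)

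\textbf{Main obstacle.} The delicate step is Step 1: one must correctly handle the interplay between the two outer differentiations $(\cdot)^{(j)}$ and the multiplication by $1-x^{2}$ coming from $w_{\alpha+1,\beta+1}/w_{\alpha,\beta}$, keeping track of the precise coefficient $j(j+2\gamma-1)$ that must emerge. All later steps are essentially bookkeeping once that recurrence is in hand.
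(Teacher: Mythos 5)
The paper does not actually prove this theorem: it is quoted from \cite{EKLWY} (``In \cite{EKLWY}, the authors prove the following result\dots''), so there is no in-paper proof to compare against. Your plan reconstructs essentially the argument of that reference: induction on $n$ to show each application of $\ell_{\alpha,\beta}$ sends the $j$-th Lagrangian-symmetric term to a combination of the $j$-th and $(j+1)$-st terms, yielding the coefficient recurrence $c_{j}^{(\gamma)}(n+1,k)=c_{j-1}^{(\gamma)}(n,k)+\bigl[j(j+2\gamma-1)+k\bigr]c_{j}^{(\gamma)}(n,k)$; then identification of $c_{j}^{(\gamma)}(n,0)$ with the explicit sum (\ref{Jacobi-Stirling number}) via the same recurrence; then the generating function by matching the recurrence against $G_{j}=G_{j-1}+j(j+2\gamma-1)\,x\,G_{j}$ (this last step is exactly the paper's proof of Theorem \ref{Properties of the Jacobi-Stirling numbers}(iii) run in reverse). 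I checked the two nontrivial computations: the factorization $r(r+2\gamma-1)-j(j+2\gamma-1)=(r-j)(r+j+2\gamma-1)$ does absorb cleanly into $(j-r)!\,\Gamma(j+r+2\gamma)$ to shift $j\mapsto j-1$, and the binomial expansion does solve the inhomogeneous recurrence for $k>0$. One correction is needed in your key identity in Step 1: as displayed it fails already at $j=0$, where it would give $\ell_{\alpha,\beta}[y]=+\tfrac{1}{w_{\alpha,\beta}}(w_{\alpha+1,\beta+1}y')'+ky$ instead of the defining minus sign. The new top-order term must enter as
\[
-\bigl(w_{\alpha+j+1,\beta+j+1}\,y^{(j+1)}\bigr)^{(j+1)},
\]
and it is precisely this minus sign that generates the alternating factor $(-1)^{j}$ in (\ref{Powers of Jacobi DE}); once the identity is stated for the signed terms $(-1)^{j}w_{\alpha,\beta}^{-1}\bigl(w_{\alpha+j,\beta+j}y^{(j)}\bigr)^{(j)}$ the recurrence you extract is correct (I verified the coefficient $j(j+2\gamma-1)$ explicitly for $j=1$, $\gamma=1$). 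With that sign fixed, the plan is a sound and complete route to the theorem; the only item you leave implicit is the asserted nonnegativity of the $c_{j}^{(\gamma)}(n,k)$, which is immediate from your recurrence since $j(j+2\gamma-1)+k\ge 0$ for $\alpha,\beta>-1$.
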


From (\ref{Rational Generating Function}), we see that we can extend the
definition of these numbers to include the initial conditions
\begin{equation}%
\genfrac{\{}{\}}{0pt}{}{n}{0}%
_{\gamma}=\delta_{n,0}\text{ and }%
\genfrac{\{}{\}}{0pt}{}{0}{j}%
_{\gamma}=\delta_{j,0}. \label{Initial Conditions}%
\end{equation}

The original motivation in the discovery of the Jacobi-Stirling numbers can be
seen from the columns in Table 1. Indeed, the numbers in the $n^{th}$ column
of Table 1 are precisely the coefficients of the $n^{th}$ power of the Jacobi
differential expression $\ell_{\alpha,\beta}[\cdot]$; for example,
\begin{align*}
\ell_{\alpha,\beta}^{3}  &  [y](x)=\dfrac{1}{w_{\alpha,\beta}(x)}%
[-\mathbf{1}((1-x)^{\alpha+3}(1+x)^{\beta+3})y^{\prime\prime\prime
}(x))^{\prime\prime\prime}\\
&  +((\mathbf{6\gamma+2})(1-x)^{\alpha+2}(1+x)^{\beta+2}y^{\prime\prime
}(x))^{\prime\prime}-(\mathbf{4\gamma}^{2}(1-x)^{\alpha+1}(1+x)^{\beta
+1})y^{\prime}(x))^{\prime}].
\end{align*}
On the other hand, the rows in Table 1 demonstrate the horizontal generating
function (\ref{Rational Generating Function}) for the Jacobi-Stirling numbers.
For example, reading along the row beginning with $j=2,$ we see that%
\[
\prod_{r=1}^{2}\dfrac{1}{1-r(r+2\gamma-1)t}=\mathbf{1}+(\mathbf{6\gamma
+2)}t+(\mathbf{28\gamma}^{2}\mathbf{+20\gamma+4})t^{2}+(\mathbf{120\gamma}%
^{3}\mathbf{+136\gamma}^{2}\mathbf{+56\gamma+8})t^{3}+\ldots.
\]

\section{Jacobi Stirling Numbers Versus Stirling Numbers of the Second
Kind\label{Comparison}}

The Jacobi-Stirling numbers $%
\genfrac{\{}{\}}{0pt}{}{n}{j}%
_{\gamma}$ are similar in many ways to the classical Stirling numbers of the
second kind, which we denote by $%
\genfrac{\{}{\}}{0pt}{}{n}{j}%
$. As a first point of comparison, we note that, as reported in \cite{LW} (see
also \cite{ELW-Hermite}), the Stirling numbers of the second kind are the
coefficients of the integral powers of the second-order Laguerre differential
expression $m[\cdot]$, defined by
\[
m[y](x):=\frac{1}{x^{\alpha}e^{-x}}\left(  -x^{\alpha+1}e^{-x}y^{\prime
}(x)\right)  ^{\prime}\quad(x\in(0,\infty)).
\]
Indeed, for each $n\in\mathbb{N},$%
\begin{equation}
m^{n}[y](x)=\frac{1}{x^{\alpha}e^{-x}}\sum_{j=1}^{n}(-1)^{j}\left(
\genfrac{\{}{\}}{0pt}{}{n}{j}%
x^{\alpha+j}e^{-x}y^{(j)}(x)\right)  ^{(j)}; \label{Laguerre powers}%
\end{equation}
compare (\ref{Jacobi powers special case}) and (\ref{Laguerre powers}).

The following table lists various properties of the Stirling numbers of the
second kind.
\[%
\begin{tabular}
[c]{|l|l|}\hline
Property & Stirling Numbers 2nd Kind\\\hline
Rational GF & ${\displaystyle\prod\limits_{r=1}^{j}\dfrac{1}{1-rx}=\sum
_{n=0}^{\infty}%
\genfrac{\{}{\}}{0pt}{}{n}{j}%
x^{n-j}}$\\\hline
Vertical RR & ${\displaystyle%
\genfrac{\{}{\}}{0pt}{}{n}{j}%
=\sum_{r=j}^{n}%
\genfrac{\{}{\}}{0pt}{}{r-1}{j-1}%
j^{n-r}}$\\\hline
Triangular RR & $%
\genfrac{\{}{\}}{0pt}{}{n}{j}%
=%
\genfrac{\{}{\}}{0pt}{}{n-1}{j-1}%
+j%
\genfrac{\{}{\}}{0pt}{}{n-1}{j}%
$\\
& $%
\genfrac{\{}{\}}{0pt}{}{n}{0}%
=\delta_{n,0}$; $%
\genfrac{\{}{\}}{0pt}{}{0}{j}%
=\delta_{j,0}$\\\hline
Horizontal GF & ${\displaystyle x^{n}=\sum_{j=0}^{n}%
\genfrac{\{}{\}}{0pt}{}{n}{j}%
(x)_{j}}$, where\\
& $(x)_{j}=x(x-1)\ldots(x-j+1)$\\\hline
Forward Differences & $\Delta^{k}\left(
\genfrac{\{}{\}}{0pt}{}{n}{j}%
\right)  \geq0$ for $(n\geq j$ and $k\in\mathbb{N}_{0})$\\\hline
\end{tabular}
\ \ \ \ \ \ \ \ \ \
\]

\begin{center}
\textbf{Table 2: Properties of the classical Stirling numbers of the second
kind\medskip}
\end{center}

Regarding the `forward difference' entry in the above Table, recall that the
forward difference of a sequence of numbers $\{x_{n}\}_{n=0}^{\infty}$ is the
sequence $\{\Delta x_{n}\}_{n=0}^{\infty}$ defined by%
\[
\Delta x_{n}:=x_{n+1}-x_{n}\quad(n\in\mathbb{N}_{0}).
\]
Higher-order forward differences are defined recursively by%
\[
\Delta^{k}x_{n}:=\Delta(\Delta^{k-1}x_{n})=\sum_{m=0}^{k}\binom{k}{m}%
(-1)^{m}x_{n+k-m}.
\]
Comtet \cite[Propostion, p. 749]{Comtet-1972} indicates the forward
differences inequalities for the Stirling numbers. Two excellent sources for
the Stirling numbers of the second kind, and their properties, are the books
by Abramowitz and Stegun \cite{AS} and Comtet \cite[Chapter V]{Comtet}.

We now state a theorem that has the comparable properties of the
Jacobi-Stirling numbers; the reader will immediately observe the close
similarities between Stirling numbers of the second kind and Jacobi-Stirling
numbers. The details and proofs of most of these properties are given in
\cite{Andrews-Gawronski-Littlejohn} in the case of the Legendre-Stirling
numbers $(\alpha=\beta=0).$ Since the proofs are almost identical, we will
only sketch proofs when necessary.

\begin{theorem}
\label{Properties of the Jacobi-Stirling numbers} The Jacobi-Stirling numbers
$%
\genfrac{\{}{\}}{0pt}{}{n}{j}%
_{\gamma}$ have the following properties.

\begin{enumerate}
\item[(i)] $($Rational Generating Function$)$ For all $j\in\mathbb{N}_{0}$,
\[
\prod\limits_{r=1}^{j}\dfrac{1}{1-r(r+2\gamma-1)x}=\sum_{n=0}^{\infty}%
\genfrac{\{}{\}}{0pt}{}{n}{j}%
_{\gamma}x^{n-j}=\sum_{n=j}^{\infty}%
\genfrac{\{}{\}}{0pt}{}{n}{j}%
_{\gamma}x^{n-j}\hspace{30pt}\left(  \left\vert x\right\vert <\dfrac
{1}{j(j+2\gamma-1)}\right)  ;
\]
in particular, for each $n,j\in\mathbb{N},$ $%
\genfrac{\{}{\}}{0pt}{}{n}{j}%
_{\gamma}$ is a polynomial in $\gamma$ with nonnegative integer coefficients.

\item[(ii)] $($Vertical Recurrence Relation$)$ For all $n,j\in\mathbb{N}_{0}%
$,
\[%
\genfrac{\{}{\}}{0pt}{}{n}{j}%
_{\gamma}=\sum_{r=j}^{n}%
\genfrac{\{}{\}}{0pt}{}{r-1}{j-1}%
_{\gamma}(j(j+2\gamma-1))^{n-r}.
\]

\item[(iii)] $($Triangular Recurrence Relation$)$ For all $n,j\in\mathbb{N}$,
\[%
\genfrac{\{}{\}}{0pt}{}{n}{j}%
_{\gamma}=%
\genfrac{\{}{\}}{0pt}{}{n-1}{j-1}%
_{\gamma}+j(j+2\gamma-1)%
\genfrac{\{}{\}}{0pt}{}{n-1}{j}%
_{\gamma},
\]
and for all $n,j\in\mathbb{N}_{0}$,
\[%
\genfrac{\{}{\}}{0pt}{}{n}{0}%
_{\gamma}=\delta_{n,0}\hspace{40pt}%
\genfrac{\{}{\}}{0pt}{}{0}{j}%
_{\gamma}=\delta_{j,0}.
\]

\item[(iv)] $($Horizontal Generating Function$)$ For all $n\in\mathbb{N}_{0}%
$,
\[
x^{n}=\sum_{j=0}^{n}%
\genfrac{\{}{\}}{0pt}{}{n}{j}%
_{\gamma}\langle x\rangle_{j}^{(\gamma)},
\]
where $\langle x\rangle_{j}^{(\gamma)}$ is the generalized falling factorial
defined by
\[
\langle x\rangle_{j}^{(\gamma)}:=\text{ }\left\{
\begin{array}
[c]{cl}%
1 & \text{if }j=0\\
\prod_{m=0}^{j-1}(x-m(m+2\gamma-1)) & \text{if }j\in\mathbb{N}.
\end{array}
\right.
\]

\item[(v)] $($Forward Differences$)$ For all $k\in\mathbb{N}_{0},$
\[
\Delta^{k}\left(  \dfrac{%
\genfrac{\{}{\}}{0pt}{}{n}{j}%
_{\gamma}}{(2\gamma)^{n}}\right)  \geq0\quad(n\geq j).
\]

\end{enumerate}
\end{theorem}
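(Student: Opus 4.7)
The plan is to derive parts (i)--(iv) as formal consequences of the generating function already identified in Theorem~\ref{Powers of Jacobi differential expression}, and to dispose of the one genuinely new assertion (v) by a short double induction.

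For (i), the last sentence of Theorem~\ref{Powers of Jacobi differential expression} already identifies $\stirlingsecond{n}{j}_{\gamma}$ as the coefficient of $x^{n-j}$ in the displayed product, so the first equality is immediate; since each factor $(1-m(m+2\gamma-1)x)^{-1}$ expands as a geometric series whose coefficients are polynomials in $\gamma$ with nonnegative integer coefficients, the Cauchy product over $m=1,\ldots,j$ inherits the same property. For (ii) I would peel off the last factor of the product, expand it as a geometric series, recognize the remaining product as the generating function for $\stirlingsecond{\cdot}{j-1}_{\gamma}$, and extract the coefficient of $x^{n-j}$ after a small reindexing. Part (iii) runs this in reverse: multiplying the generating function of (i) by $1-j(j+2\gamma-1)x$ collapses it onto the generating function for $\stirlingsecond{\cdot}{j-1}_{\gamma}$, and equating coefficients of $x^{n-j}$ produces the triangular recurrence; the initial conditions are just (\ref{Initial Conditions}). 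For (iv) I would induct on $n$, using the identity $x\,\langle x\rangle_{j}^{(\gamma)} = \langle x\rangle_{j+1}^{(\gamma)} + j(j+2\gamma-1)\langle x\rangle_{j}^{(\gamma)}$, obtained by factoring the bracket $x-j(j+2\gamma-1)$ out of $\langle x\rangle_{j+1}^{(\gamma)}$, together with the triangular recurrence of (iii).

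Part (v) is the one requiring a separate idea. Set $a_{n}^{(j)}:=\stirlingsecond{n}{j}_{\gamma}/(2\gamma)^{n}$ and $d_{j}:=j(j+2\gamma-1)/(2\gamma)$. Dividing the triangular recurrence of (iii) by $(2\gamma)^{n+1}$ and subtracting $a_{n}^{(j)}$ yields
\[
\Delta\, a_{n}^{(j)} \;=\; \frac{1}{2\gamma}\,a_{n}^{(j-1)} \;+\; (d_{j}-1)\,a_{n}^{(j)},
\]
and the crucial algebraic identity is
\[
d_{j}-1 \;=\; \frac{j(j+2\gamma-1)-2\gamma}{2\gamma} \;=\; \frac{(j-1)(j+2\gamma)}{2\gamma} \;\geq\; 0
\]
for $j\ge 1$ (recall that $\alpha,\beta>-1$ forces $\gamma>0$). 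Thus $\Delta a_{n}^{(j)}$ is a nonnegative linear combination of $a_{n}^{(j-1)}$ and $a_{n}^{(j)}$. Applying $\Delta^{k-1}$ to the first display gives
\[
\Delta^{k} a_{n}^{(j)} \;=\; \frac{1}{2\gamma}\,\Delta^{k-1} a_{n}^{(j-1)} \;+\; (d_{j}-1)\,\Delta^{k-1} a_{n}^{(j)},
\]
so induction on $k$, uniformly in $j$, with base case $k=0$ supplied by the nonnegativity already established in (i), yields $\Delta^{k} a_{n}^{(j)}\ge 0$ for $n\ge j$; the range $n\ge j-1$ needed for the $\Delta^{k-1} a_{n}^{(j-1)}$ term is automatic from $n\ge j$.

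The real hurdle in (v) is choosing the correct normalization: replacing $(2\gamma)^n$ by, say, $[j(j+2\gamma-1)]^n$ would not make the coefficient of $a_{n}^{(j)}$ in $\Delta a_{n}^{(j)}$ nonnegative, and the induction would collapse. With $(2\gamma)^n$ the factorization $(j-1)(j+2\gamma)$ falls out cleanly and the recursion does all the work. Parts (i)--(iv) are close analogues of the Legendre-Stirling arguments in \cite{Andrews-Gawronski-Littlejohn}, as the authors themselves emphasize, so the brief sketches above should suffice.
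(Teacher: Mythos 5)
Your treatment of (i)--(iv) matches the paper's approach: the authors only write out details for (iii), and your argument there (multiply the generating function by $1-j(j+2\gamma-1)x$ and compare coefficients of $x^{n-j}$) is exactly theirs; for (i), (ii), and (iv) they defer to the Legendre--Stirling arguments of \cite{Andrews-Gawronski-Littlejohn}, and your sketches --- the geometric-series expansion of each factor for the nonnegativity claim, peeling off the last factor for the vertical recurrence, and induction on $n$ via $x\,\langle x\rangle_{j}^{(\gamma)}=\langle x\rangle_{j+1}^{(\gamma)}+j(j+2\gamma-1)\langle x\rangle_{j}^{(\gamma)}$ for the horizontal generating function --- are the standard ones that transfer verbatim. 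For (v), where the paper again only cites \cite{Andrews-Gawronski-Littlejohn}, you supply a self-contained induction, and your key computation is correct: dividing the triangular recurrence by $(2\gamma)^{n+1}$ gives $\Delta a_{n}^{(j)}=\tfrac{1}{2\gamma}a_{n}^{(j-1)}+\tfrac{(j-1)(j+2\gamma)}{2\gamma}a_{n}^{(j)}$, and the factorization $j(j+2\gamma-1)-2\gamma=(j-1)(j+2\gamma)\geq 0$ for $j\geq 1$, $\gamma>0$ is right.

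There is, however, one genuine (though easily repaired) flaw in the induction for (v). The statement you induct on, namely ``$\Delta^{k}a_{n}^{(j)}\geq 0$ for all $j$ and all $n\geq j$,'' is false at $j=0$: since $a_{n}^{(0)}=\stirlingsecond{n}{0}_{\gamma}/(2\gamma)^{n}=\delta_{n,0}$, one has $\Delta^{k}a_{0}^{(0)}=(-1)^{k}$, which is negative for odd $k$. Your coefficient analysis also breaks there, since $d_{0}-1=-1<0$. Consequently, at the step $j=1$ you cannot invoke the inductive hypothesis for the term $\tfrac{1}{2\gamma}\Delta^{k-1}a_{n}^{(0)}$ in the asserted range $n\geq j-1=0$; the remark that ``the range $n\geq j-1$ is automatic from $n\geq j$'' papers over the fact that the hypothesis at column $0$ is not available. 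The fix is one line: run the induction only over $j\geq 1$ and note directly that $\Delta^{k-1}a_{n}^{(0)}=\Delta^{k-1}\delta_{n,0}=0$ for every $n\geq 1$, which is all that the range $n\geq j=1$ requires; for $j\geq 2$ the hypothesis at column $j-1\geq 1$ covers $n\geq j-1$ and hence $n\geq j$. (The same $j=0$ pathology is present in the theorem statement itself, which must be read with $j\geq 1$.) With that emendation your proof of (v) is complete.
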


\begin{proof}
The rational generating function, given in $(i)$, is discussed in Theorem
\ref{Powers of Jacobi differential expression}; the complete proof of $(i)$ is
given in \cite[Theorem 4.1]{EKLWY}; we also refer to \cite[Section
4.2]{Gelineau-Zeng} where the authors determine this rational generating
function in a different and simple way. The vertical recurrence relation in
$(ii)$ follows from $(i)$; in fact, the proof is similar to that given in the
Legendre-Stirling case which can be found in \cite[Theorem 5.2]%
{Andrews-Gawronski-Littlejohn}. Since the triangular recurrence relation is
important in the combinatorial interpretation of the Jacobi-Stirling numbers,
which we discuss in the next section, we give a proof of $(iii).$ The initial
conditions given in $(iii)$ are part of the definition of $%
\genfrac{\{}{\}}{0pt}{}{n}{j}%
_{\gamma},$ given in $($\ref{Initial Conditions}$).$ From $(i),$ we see that%
\[
\sum_{n=j-1}^{\infty}%
\genfrac{\{}{\}}{0pt}{}{n}{j-1}%
_{\gamma}x^{n-j+1}=(1-j(j+2\gamma-1)x)\sum_{n=j}^{\infty}%
\genfrac{\{}{\}}{0pt}{}{n}{j}%
_{\gamma}x^{n-j};
\]
shifting the index on the sum on the left-hand side, and carrying out the
multiplication on the right-side yields%
\begin{align*}
\sum_{n=j}^{\infty}%
\genfrac{\{}{\}}{0pt}{}{n-1}{j-1}%
_{\gamma}x^{n-j}  &  =\sum_{n=j}^{\infty}%
\genfrac{\{}{\}}{0pt}{}{n}{j}%
_{\gamma}x^{n-j}-j(j+2\gamma-1)\sum_{n=j}^{\infty}%
\genfrac{\{}{\}}{0pt}{}{n}{j}%
_{\gamma}x^{n-j+1}\\
&  =\sum_{n=j}^{\infty}%
\genfrac{\{}{\}}{0pt}{}{n}{j}%
_{\gamma}x^{n-j}-j(j+2\gamma-1)\sum_{n=j}^{\infty}%
\genfrac{\{}{\}}{0pt}{}{n-1}{j}%
_{\gamma}x^{n-j}%
\end{align*}
and this implies $(iii)$. The proof of $(iv)$ is similar to the proof of the
horizontal generating function for the Legendre-Stirling numbers given in
\cite[Theorem 5.4]{Andrews-Gawronski-Littlejohn}. The proof of $(v)$ is very
similar to the proof given in \cite[Theorem 5.1]{Andrews-Gawronski-Littlejohn}
in the case of the Legendre-Stirling numbers.
\end{proof}

For a different approach to parts (i), (iii), and (iv) see also
\cite{Gelineau-Zeng}.

From Table 2 and Theorem \ref{Properties of the Jacobi-Stirling numbers},
observe that the rational generating functions for the Stirling numbers of the
second kind (which we again note are connected to powers of the Laguerre
differential expression in (\ref{Laguerre powers})) and the Jacobi-Stirling
numbers (associated with the powers of the Jacobi differential expression in
(\ref{Jacobi powers special case})) involve, respectively, the coefficients
$r$ and $r(r+\alpha+\beta+1)$ in the denominators of these products.
Remarkably, and somewhat mysteriously, these coefficients are, respectively,
the eigenvalues that produce the Laguerre and Jacobi polynomial solutions of
degree $r$ in, respectively, the Laguerre and Jacobi differential equations.
Computing the integral composite powers of both the Laguerre and Jacobi
differential equations is entirely \textit{algebraic}, and one would not
initially expect these calculations to involve spectral theory. Furthermore,
each self-adjoint operator in $L_{\alpha,\beta}^{2}(-1,1)$ generated by the
Jacobi differential expression $\ell_{\alpha,\beta}[\cdot]$ has a discrete
(eigenvalues) spectrum only. It is natural to ask why the horizontal
generating function specifically involves the eigenvalues $\{r(r+\alpha
+\beta+1)\}_{r=0}^{\infty}$ associated with the operator $A^{(\alpha,\beta)}$
(from Section \ref{Background}) that has eigenfunctions $\{P_{r}%
^{(\alpha,\beta)}\}_{r=0}^{\infty}$ and not the set of eigenvalues of another
self-adjoint operator. It seems that there is an interesting connection here
that deserves further attention.

The last point that we wish to make in this section is a very interesting
connection between Legendre-Stirling numbers $%
\genfrac{\{}{\}}{0pt}{}{n}{j}%
_{1}$ and the classical Stirling numbers $S_{n}^{(j)}$ of the second kind. In
\cite[equation (3.1)]{Andrews-Gawronski-Littlejohn}, the authors prove that%
\[%
\genfrac{\{}{\}}{0pt}{}{n}{j}%
_{1}=\dfrac{1}{(2j)!}\sum_{m=0}^{2j}(-1)^{m}\binom{2j}{m}\left(
(j-m)(j+1-m)\right)  ^{n}.
\]
It follows then that
\begin{align*}%
\genfrac{\{}{\}}{0pt}{}{n}{j}%
_{1}  &  =\dfrac{\partial^{2n}}{(\partial x)^{n}(\partial y)^{n}}\dfrac
{1}{(2j)!}\sum_{m=0}^{2j}(-1)^{m}\binom{2j}{m}e^{(j-m)x+(j+1-m)y}\left\vert
_{x=y=0}\right. \\
&  =\dfrac{1}{(2j)!}\dfrac{\partial^{2n}}{(\partial x)^{n}(\partial y)^{n}%
}e^{jx+(j+1)y}(1-e^{-(x+y)})^{2j}\left\vert _{x=y=0}\right. \\
&  =\dfrac{\partial^{2n}}{(\partial x)^{n}(\partial y)^{n}}e^{-jx}%
e^{-(j-1)y}\dfrac{(e^{x+y}-1)^{2j}}{(2j)!}\left\vert _{x=y=0}\right. \\
&  =\dfrac{\partial^{2n}}{(\partial x)^{n}(\partial y)^{n}}e^{-jx}%
e^{-(j-1)y}\phi_{2j}(x+y)\left\vert _{x=y=0}\right.  ,
\end{align*}
where $\phi_{j}(\cdot)$ is the vertical generating function for the Stirling
numbers of the second kind; that is,%
\[
\phi_{j}(t):=\sum_{n=0}^{\infty}\dfrac{S_{n}^{(j)}}{n!}t^{n}=\dfrac
{(\exp(t)-1)^{j}}{j!}.
\]
Continuing, we find that
\begin{align*}%
\genfrac{\{}{\}}{0pt}{}{n}{j}%
_{1}  &  =\dfrac{\partial^{2n}}{(\partial x)^{n}(\partial y)^{n}}%
e^{-jx}e^{-(j-1)y}\phi_{2j}(x+y)\left\vert _{x=y=0}\right. \\
&  =\left(  \frac{\partial}{\partial x}\right)  ^{n}e^{-jx}\sum_{\nu=0}%
^{n}\binom{n}{\nu}\left\{  \left(  \frac{\partial}{\partial y}\right)
^{n-\nu}e^{-(j-1)y}\right\}  \left\{  \left(  \frac{\partial}{\partial
y}\right)  ^{\nu}\phi_{2j}(x+y)\right\}  \left\vert _{x=y=0}\right. \\
&  =\sum_{\nu,\mu=0}^{n}\binom{n}{\nu}\binom{n}{\mu}\left(  -(j-1)\right)
^{n-\nu}(-j)^{n-\mu}\phi_{2j}^{(\nu+\mu)}(0)\\
&  =\sum_{\nu,\mu=0}^{n}\binom{n}{\nu}\binom{n}{\mu}\left(  -(j-1)\right)
^{n-\nu}(-j)^{n-\mu}S_{\nu+\mu}^{(2j)}.
\end{align*}
Using the forward difference operator, this latter formula for the
Legendre-Stirling numbers may be written in the compact form%
\begin{align}%
\genfrac{\{}{\}}{0pt}{}{n}{1}%
_{1}  &  =\Delta^{n}S_{n}^{(2)}=2^{n-1}\label{LS-S1}\\%
\genfrac{\{}{\}}{0pt}{}{n}{j}%
_{1}  &  =(j-1)^{n}j^{n}\Delta_{x}^{n}\Delta_{y}^{n}\frac{S_{x+y}^{(2j)}%
}{(j-1)^{x}j^{y}}\left\vert _{x=y=0}\right.  \text{ when }j\geq2,
\label{LS-S2}%
\end{align}
with the obvious meaning of $\Delta_{x}$ and $\Delta_{y}.$ It would be
interesting to see if there is a similar connection between the Stirling
numbers of the second kind and the Jacobi-Stirling numbers.

\section{A Combinatorial Interpretation of the Jacobi-Stirling
Numbers\label{Main Results1}}

The Stirling number of the second kind $%
\genfrac{\{}{\}}{0pt}{}{n}{j}%
$ is the number of set partitions of $\{1,2,\ldots,n\}$ into $j$ nonempty
blocks. That is, $%
\genfrac{\{}{\}}{0pt}{}{n}{j}%
$ is the number of ways of placing $n$ objects into $j$ non-empty,
indistinguishable sets; for a full account of their properties, see \cite{AS}
and Comtet \cite[Chapter V]{Comtet}. With this in mind, it is natural to ask
for a similar combinatorial interpretation of the Jacobi-Stirling number $%
\genfrac{\{}{\}}{0pt}{}{n}{j}%
_{\gamma}$. Indeed, Andrews and Littlejohn \cite{Andrews-Littlejohn} have
generalized the notion of a set partition to give a combinatorial
interpretation of the Legendre-Stirling number $%
\genfrac{\{}{\}}{0pt}{}{n}{j}%
_{1}$; one might call Andrews and Littlejohn's generalized set partitions
\emph{Legendre-Stirling set partitions}. More recently, Gelineau and Zeng
\cite{Gelineau-Zeng} have found a statistic on Legendre-Stirling set
partitions which allows them to interpret $%
\genfrac{\{}{\}}{0pt}{}{n}{j}%
_{\gamma}$ as a generating function over Legendre-Stirling set partitions. In
this section we generalize Legendre-Stirling set partitions still further, to
obtain objects we will call \emph{Jacobi-Stirling set partitions}. When
$\gamma=1$ it will be clear that the Jacobi-Stirling set partitions are in
fact Legendre-Stirling set partitions, and we will show that for any positive
integer $\gamma$, the Jacobi-Stirling numbers count Jacobi-Stirling set partitions.

To describe a combinatorial interpretation of the Jacobi-Stirling number $%
\genfrac{\{}{\}}{0pt}{}{n}{j}%
_{\gamma}$, let $[n]_{2}$ denote the set $\{1_{1},1_{2},2_{1},2_{2}%
,\ldots,n_{1},n_{2}\}$, which contains two copies of each positive integer
between $1$ and $n$; we may say that these are the integers $\{1,2,\ldots,n\}$
with two different colors. By convention $[0]_{2}$ is the empty set.

\begin{definition}
For all $n,j,\gamma\in\mathbb{N}_{0}$, a \emph{Jacobi-Stirling set partition}
of $[n]_{2}$ into $\gamma$ zero blocks and $j$ nonzero blocks is an ordinary
set partition of $[n]_{2}$ into $j+\gamma$ blocks for which the following
conditions hold:

\begin{enumerate}
\item $\gamma$ of our blocks, called the \emph{zero blocks}, are
distinguishable, but all other blocks are indistinguishable.

\item The zero blocks may be empty, but all other blocks are nonempty.

\item The union of the zero blocks may not contain both copies of any number.

\item Each nonzero block contains both copies of the smallest number it
contains, but does not contain both copies of any other number.
\end{enumerate}
\end{definition}

\begin{example}
\label{Example 1} As we see in the table below, there are 20 Jacobi-Stirling
set partitions of $[3]_{2}$ into $\gamma= 3$ zero blocks and $j = 2$ nonzero
blocks.
\[%
\begin{tabular}
[c]{|l|l|l|l|}\hline
Zero Boxes & Nonzero Boxes & Zero Boxes & Nonzero Boxes\\\hline
$\varnothing,\varnothing,\varnothing$ & $\{1_{1},1_{2},3_{1}\},\{2_{1}%
,2_{2},3_{2}\}$ & $\{3_{1}\},\varnothing,\varnothing$ & $\{1_{1},1_{2}%
,3_{2}\},\{2_{1},2_{2}\}$\\\hline
$\varnothing,\varnothing,\varnothing$ & $\{1_{1},1_{2},3_{1}\},\{2_{1}%
,2_{2},3_{2}\}$ & $\varnothing,\varnothing,\{3_{2}\}$ & $\{1_{1},1_{2}%
,3_{1}\},\{2_{1},2_{2}\}$\\\hline
$\varnothing,\varnothing,\{3_{1}\}$ & $\{1_{1},1_{2}\},\{2_{1},2_{2},3_{2}\}$
& $\varnothing,\{3_{2}\},\varnothing$ & $\{1_{1},1_{2},3_{1}\},\{2_{1}%
,2_{2}\}$\\\hline
$\varnothing,\{3_{1}\},\varnothing$ & $\{1_{1},1_{2}\},\{2_{1},2_{2},3_{2}\}$
& $\{3_{2}\},\varnothing,\varnothing$ & $\{1_{1},1_{2},3_{1}\},\{2_{1}%
,2_{2}\}$\\\hline
$\{3_{1}\},\varnothing,\varnothing$ & $\{1_{1},1_{2}\},\{2_{1},2_{2},3_{2}\}$
& $\{2_{1}\},\varnothing,\varnothing$ & $\{1_{1},1_{2},2_{2}\},\{3_{1}%
,3_{2}\}$\\\hline
$\varnothing,\varnothing,\{3_{2}\}$ & $\{1_{1},1_{2}\},\{2_{1},2_{2},3_{1}\}$
& $\varnothing,\{2_{1}\},\varnothing$ & $\{1_{1},1_{2},2_{2}\},\{3_{1}%
,3_{2}\}$\\\hline
$\varnothing,\{3_{2}\},\varnothing$ & $\{1_{1},1_{2}\},\{2_{1},2_{2},3_{1}\}$
& $\varnothing,\varnothing,\{2_{1}\}$ & $\{1_{1},1_{2},2_{2}\},\{3_{1}%
,3_{2}\}$\\\hline
$\{3_{2}\},\varnothing,\varnothing$ & $\{1_{1},1_{2}\},\{2_{1},2_{2},3_{1}\}$
& $\{2_{2}\},\varnothing,\varnothing$ & $\{1_{1},1_{2},2_{1}\},\{3_{1}%
,3_{2}\}$\\\hline
$\varnothing,\varnothing,\{3_{1}\}$ & $\{1_{1},1_{2},3_{2}\},\{2_{1},2_{2}\}$
& $\varnothing,\{2_{2}\},\varnothing$ & $\{1_{1},1_{2},2_{1}\},\{3_{1}%
,3_{2}\}$\\\hline
$\varnothing,\{3_{1}\},\varnothing$ & $\{1_{1},1_{2},3_{2}\},\{2_{1},2_{2}\}$
& $\varnothing,\varnothing,\{2_{2}\}$ & $\{1_{1},1_{2},2_{1}\},\{3_{1}%
,3_{2}\}$\\\hline
\end{tabular}
\ \ \ \ \ \ \ \
\]

\end{example}

As we show next, Jacobi-Stirling numbers count Jacobi-Stirling set partitions.

\begin{theorem}
\label{Combinatorial Interpretation} For all $n,j,\gamma\in\mathbb{N}_{0}$,
the Jacobi-Stirling number $%
\genfrac{\{}{\}}{0pt}{}{n}{j}%
_{\gamma}$ is the number of Jacobi-Stirling set partitions of $[n]_{2}$ into
$\gamma$ zero blocks and $j$ nonzero blocks.
\end{theorem}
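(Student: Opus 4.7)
The plan is to show that the combinatorial count $J(n,j,\gamma)$ of Jacobi-Stirling set partitions of $[n]_2$ into $\gamma$ zero blocks and $j$ nonzero blocks satisfies both the initial conditions and the triangular recurrence of Theorem~\ref{Properties of the Jacobi-Stirling numbers}(iii). Since that recurrence determines the Jacobi-Stirling numbers uniquely, equality will follow by a straightforward double induction on $n$ and $j$.

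The initial conditions are essentially by inspection. The partition with $\gamma$ empty zero blocks and no other blocks is the unique Jacobi-Stirling set partition of $[0]_2$, giving $J(0,0,\gamma)=1$ and $J(0,j,\gamma)=0$ for $j\ge 1$. For $n\ge 1$ and $j=0$, both copies of $n$ would have to land in zero blocks, violating the rule that the union of zero blocks contains at most one copy of any number; hence $J(n,0,\gamma)=0$. This matches the $\delta$ conditions in part (iii) of Theorem~\ref{Properties of the Jacobi-Stirling numbers}.

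For the recurrence, I would classify a Jacobi-Stirling set partition of $[n]_2$ by the location of $n_1$ and $n_2$. In the first case $\{n_1,n_2\}$ is itself a nonzero block; deleting it yields a bijection with Jacobi-Stirling set partitions of $[n-1]_2$ into $\gamma$ zero blocks and $j-1$ nonzero blocks, contributing $J(n-1,j-1,\gamma)$. In the remaining case, $n_1$ and $n_2$ are inserted into some partition of $[n-1]_2$ that already has $\gamma$ zero blocks and $j$ nonzero blocks, and the task reduces to counting the valid ways to drop the two copies of $n$ into the $j+\gamma$ existing blocks. The main obstacle, and the only step that requires care, is verifying that the number of admissible placements is exactly $j(j+2\gamma-1)$.

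To handle this count, I will enumerate the total $(j+\gamma)^2$ ordered placements of $(n_1,n_2)$ and subtract the forbidden ones. The two types of forbidden configurations are: (a) both $n_1$ and $n_2$ land in zero blocks (any pair, equal or not), violating condition~(3), contributing $\gamma^2$; and (b) $n_1$ and $n_2$ land together in a single nonzero block, which since $n$ is not the minimum of that block would violate condition~(4), contributing $j$. All other placements are legal: a single copy in a zero block is fine; a single copy in a nonzero block is fine since the minimum of that block is smaller than $n$; and copies split across two distinct nonzero blocks are fine. Thus the valid count is $(j+\gamma)^2 - \gamma^2 - j = j^2+2j\gamma - j = j(j+2\gamma-1)$, so the second case contributes $j(j+2\gamma-1)\,J(n-1,j,\gamma)$. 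Combining the two cases gives the triangular recurrence, and induction completes the proof.
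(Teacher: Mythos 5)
Your proof is correct and follows essentially the same route as the paper: verify the initial conditions and the triangular recurrence of Theorem \ref{Properties of the Jacobi-Stirling numbers}(iii) by classifying partitions of $[n]_2$ according to whether $\{n_1,n_2\}$ forms its own nonzero block. The only cosmetic difference is that you obtain the factor $j(j+2\gamma-1)$ by complementary counting, $(j+\gamma)^2-\gamma^2-j$, whereas the paper counts the admissible placements directly as $\gamma j + j(\gamma+j-1)$.
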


\begin{proof}
For all $n,j,\gamma\in\mathbb{N}_{0}$, let $P(n,j,\gamma)$ denote the number
of Jacobi-Stirling set partitions of $[n]_{2}$ into $\gamma$ zero blocks and
$j$ nonzero blocks. Since the Jacobi-Stirling numbers are determined by the
initial conditions and recurrence relation in Theorem
\ref{Properties of the Jacobi-Stirling numbers}(iii), it is sufficient to show
that $P(n,j,\gamma)$ satisfies the same initial conditions and recurrence relation.

Since the union of the zero blocks of a Jacobi-Stirling set partition cannot
contain both $1_{1}$ and $1_{2}$, we see that $P(n,0,\gamma) = 0$ if $n > 0$.
On the other hand, for each $\gamma$ there is one Jacobi-Stirling set
partition of the empty set, so $P(0,0,\gamma) = 1$. Finally, since the nonzero
blocks of a Jacobi-Stirling set partition must be nonempty, we see that
$P(0,j,\gamma) = 0$ if $j > 0$. Therefore $P(n,j,\gamma)$ satisfies the same
initial conditions as $%
\genfrac{\{}{\}}{0pt}{}{n}{j}%
_{\gamma}$.

To see that $P(n,j,\gamma)$ satisfies the same recurrence relation, we note
that for $n \ge1$, Jacobi-Stirling set partitions of $[n]_{2}$ into $\gamma$
zero blocks and $j$ nonzero blocks come in two disjoint types:

\begin{enumerate}
\item[(i)] those in which $n_{1}$ and $n_{2}$ are in the same box;

\item[(ii)] those in which $n_{1}$ and $n_{2}$ are in different boxes.
\end{enumerate}

Each Jacobi-Stirling set partition in class (i) can be uniquely constructed
from a Jacobi-Stirling set partition of $[n-1]_{2}$ into $\gamma$ zero blocks
and $j-1$ nonzero blocks by appending a nonzero block containing only $n_{1}$
and $n_{2}$. Therefore there are $P(n-1,j-1,\gamma)$ partitions in class (i).
On the other hand, each Jacobi-Stirling set partition in class (ii) can be
uniquely constructed from a Jacobi-Stirling set partition of $[n-1]_{2}$ into
$\gamma$ zero blocks and $j-1$ nonzero blocks by either inserting $n_{1}$ into
a zero box and inserting $n_{2}$ into a nonzero box, or by inserting $n_{1}$
into an nonzero box and inserting $n_{2}$ into any box not containing $n_{1}$.
There are $\gamma j P(n-1,j,\gamma)$ partitions of the first type, and $j
(\gamma+j-1) P(n-1,j,\gamma)$ partitions of the second type, so there are $j
(j+2\gamma-1) P(n-1,j,\gamma)$ partitions in class (ii). Therefore
$P(n,j,\gamma) = P(n-1,j-1,\gamma) + j (j+2\gamma-1) P(n-1,j,\gamma)$, so
$P(n,j,\gamma)$ satisfies the same recurrence relation as $%
\genfrac{\{}{\}}{0pt}{}{n}{j}%
_{\gamma}$, and the result follows.
\end{proof}

\begin{example}
\label{Example 2}In \cite[Example 4.4]{Andrews-Littlejohn}, the authors showed
that $%
\genfrac{\{}{\}}{0pt}{}{n}{1}%
_{1}=2^{n-1}.$ This argument generalizes to show that $%
\genfrac{\{}{\}}{0pt}{}{n}{1}%
_{\gamma}=(2\gamma)^{n-1}$ for all $n\in\mathbb{N}.$
\end{example}

\begin{example}
\label{Example 3}In this example we give a direct combinatorial proof that $%
\genfrac{\{}{\}}{0pt}{}{n}{n-1}%
_{\gamma}=2\binom{n}{3}+2\gamma\binom{n}{2},$ where $\binom{n}{j}$ denotes the
usual binomial coefficient. By Theorem \ref{Combinatorial Interpretation}, the
quantity $%
\genfrac{\{}{\}}{0pt}{}{n}{n-1}%
_{\gamma}$ is the number of Jacobi-Stirling partitions of $[n]_{2}$ into
$\gamma$ zero blocks and $n-1$ nonzero blocks. In such a partition there must
be exactly one number $k$ which is not the minimum in its block, and at least
one copy of $k$ is in a nonzero block. If both copies of $k$ are in nonzero
blocks, then we may construct our partition uniquely by choosing the elements
$i<j<k$ of these blocks, and then placing $k_{1}$ and $k_{2}$ in blocks with
minima $i$ and $j$. Thus there are $2\binom{n}{3}$ of these partitions.
Alternatively, if one copy of $k$ is in a zero block, then we may construct
our partition uniquely by choosing the elements $i<k$ of the nonzero block
containing $k$, constructing that block with one copy of $k$, and placing the
other copy of $k$ in one of the $\gamma$ zero blocks. Therefore there are
$2\gamma\binom{n}{2}$ of these partitions. Combining these two counts, we find
that $%
\genfrac{\{}{\}}{0pt}{}{n}{n-1}%
_{\gamma}=2\binom{n}{3}+2\gamma\binom{n}{2}$, as claimed.
\end{example}

In a development independent of this work, Mongelli has recently given another
combinatorial interpretation of the Jacobi-Stirling numbers \cite{Mongelli2}.
In fact, Mongelli gives a combinatorial interpretation of a general family of
numbers, which includes the Jacobi-Stirling numbers as a special case.
Translated into our setting, Mongelli shows inductively that if $z =
2\gamma-1$ is a positive integer then $%
\genfrac{\{}{\}}{0pt}{}{n}{j}%
_{\gamma}$ is the number of set partitions of $[n]_{2}$ into $z$ zero blocks
and $j$ nonzero blocks for which the following conditions hold.

\begin{enumerate}
\item[(1)] The zero blocks are distinguishable, but the nonzero blocks are indistinguishable.

\item[(2)] The zero blocks may be empty, but the nonzero blocks are nonempty.

\item[(3)] No zero block may contain the first copy of any number.

\item[(4)] Each nonzero block contains both copies of the smallest number it contains.
\end{enumerate}

For convenience, we call one of Mongelli's set partitions a \emph{long
Jacobi-Stirling set partition}.

For any $n,j,\gamma\in{\mathbb{N}}_{0}$, we can give a bijection between the
associated Jacobi-Stirling set partitions and the associated long
Jacobi-Stirling set partitions. To do this, suppose a given long
Jacobi-Stirling set partition has zero blocks $Z_{0}, Z_{1}, \ldots,
Z_{2\gamma-2}$ and nonzero blocks $B_{1},\ldots, B_{j}$. To obtain a
Jacobi-Stirling set partition, for each $i$, $1 \le i \le n$, we do the following.

\begin{itemize}
\item If $i_{1}$ and $i_{2}$ are in different nonzero blocks, then we leave
$i_{1}$ and $i_{2}$ where they are.

\item If $i_{1}$ and $i_{2}$ are in the same nonzero block and they are the
smallest element of that block, then we leave $i_{1}$ and $i_{2}$ where they are.

\item If $i_{1}$ and $i_{2}$ are in the same nonzero block $B_{k}$ and they
are not the smallest element of that block, then we put $i_{1}$ in $Z_{0}$ and
we leave $i_{2}$ where it is.

\item If $i_{2} \in Z_{k}$ for $0 \le k \le\gamma-1$ then we leave $i_{1}$ and
$i_{2}$ where they are.

\item If $i_{2} \in Z_{\gamma+k-1}$ for $1 \le k \le\gamma-1$ and $i_{1} \in
B_{m}$ then we put $i_{2}$ in $B_{m}$ and we put $i_{1}$ in $Z_{k}$.
\end{itemize}

Now $Z_{\gamma}, Z_{\gamma+1},\ldots,Z_{2\gamma-2}$ are empty, so we remove
them from our set partition. The result is a Jacobi-Stirling set partition,
and we can reverse the process step-by-step to recover the original long
Jacobi-Stirling set partition.

\section{The Jacobi-Stirling Numbers of the First and Second Kinds\label{JS1}}

We saw in Theorems \ref{Properties of the Jacobi-Stirling numbers} and
\ref{Combinatorial Interpretation} that the Jacobi-Stirling numbers are
natural analogues of the Stirling numbers of the second kind (see also
\cite{Gelineau-Zeng} and \cite{Mongelli2}). For instance, both of these
families of numbers have elegant horizontal generating functions: the Stirling
numbers of the second kind satisfy
\begin{equation}
x^{n}=\sum_{j=0}^{n}%
\genfrac{\{}{\}}{0pt}{}{n}{j}%
(x)_{j},\hspace{30pt}(n\in\mathbb{N}_{0}), \label{eqn:S2expansion}%
\end{equation}
where $(x)_{j}$ is the falling factorial defined by
\begin{equation}
(x)_{j}:=%
\begin{cases}
x(x-1)\cdots(x-j+1) & \text{if }j\in\mathbb{N}\\
1 & \text{if }j=0,
\end{cases}
\label{eqn:fallingfact}%
\end{equation}
while the Jacobi-Stirling numbers satisfy
\begin{equation}
x^{n}=\sum_{j=0}^{n}%
\genfrac{\{}{\}}{0pt}{}{n}{j}%
_{\gamma}\langle x\rangle_{j}^{(\gamma)}, \label{eqn:js2expansion}%
\end{equation}
where $\langle x\rangle_{j}^{(\gamma)}$ is the generalized falling factorial
defined by
\begin{equation}
\langle x\rangle_{j}^{(\gamma)}:=%
\begin{cases}
1 & \text{if }j=0\\
\prod_{m=0}^{j-1}(x-m(m+2\gamma-1)) & \text{if }j\in\mathbb{N}.
\end{cases}
\label{eqn:jsfalling}%
\end{equation}
In the case of the Stirling numbers we can invert (\ref{eqn:S2expansion}) to
obtain the Stirling numbers of the first kind. In particular, these (signless)
Stirling numbers of the first kind $%
\genfrac{[}{]}{0pt}{}{n}{j}%
$ may be defined by
\begin{equation}
(x)_{n}=\sum_{j=0}^{n}(-1)^{n+j}%
\genfrac{[}{]}{0pt}{}{n}{j}%
x^{j},\hspace{40pt}(n\in\mathbb{N}_{0}), \label{eqn:S1expansion}%
\end{equation}
where $(x)_{n}$ is the falling factorial given in (\ref{eqn:fallingfact}).
Similarly, if we invert (\ref{eqn:js2expansion}) then we obtain a collection
of numbers we will call the (\emph{signless})\emph{ Jacobi-Stirling numbers of
the first kind}, and which we will denote by $%
\genfrac{[}{]}{0pt}{}{n}{j}%
_{\gamma}$. Specifically,
\begin{equation}
\langle x\rangle_{n}^{(\gamma)}=\sum_{j=0}^{n}(-1)^{n+j}%
\genfrac{[}{]}{0pt}{}{n}{j}%
_{\gamma}x^{j}, \label{eqn:JS1 poly}%
\end{equation}
where $\langle x\rangle_{j}^{(\gamma)}$ is the generalized falling factorial
defined in (\ref{eqn:jsfalling}). We immediately obtain the following
biorthogonality relationships:
\[
\sum_{m\leq j\leq n}(-1)^{n+j}%
\genfrac{[}{]}{0pt}{}{n}{j}%
_{\gamma}%
\genfrac{\{}{\}}{0pt}{}{j}{m}%
_{\gamma}=\delta_{n,m}\hspace{30pt}(n,m\in\mathbb{N}_{0}),
\]%
\[
\sum_{m\leq j\leq n}(-1)^{j+m}%
\genfrac{\{}{\}}{0pt}{}{n}{j}%
_{\gamma}%
\genfrac{[}{]}{0pt}{}{j}{m}%
_{\gamma}=\delta_{n,m}\hspace{30pt}(n,m\in\mathbb{N}_{0}).
\]
The table below lists the Jacobi-Stirling numbers of the first kind for small
$n$ and $j$; see also Table 2 in \cite{Gelineau-Zeng}.

{\scriptsize
\[%
\begin{tabular}
[c]{|l|l|l|l|l|l|l|}\hline
$j/n$ & $n=0$ & $n=1$ & $n=2$ & $n=3$ & $n=4$ & $n=5$\\\hline
$j=0$ & $1$ & $0$ & $0$ & $0$ & $0$ & $0$\\\hline
$j=1$ & $0$ & $1$ & $2\gamma$ & $8\gamma^{2}+4\gamma$ & $48\gamma^{3}%
+72\gamma^{2}+24\gamma$ & $384\gamma^{4}+1152\gamma^{3}+1056\gamma
^{2}+288\gamma$\\\hline
$j=2$ & $0$ & $0$ & $1$ & $6\gamma+2$ & $44\gamma^{2}+52\gamma+12$ &
$400\gamma^{3}+1016\gamma^{2}+744\gamma+144$\\\hline
$j=3$ & $0$ & $0$ & $0$ & $1$ & $12\gamma+8$ & $140\gamma^{2}+260\gamma
+108$\\\hline
$j=4$ & $0$ & $0$ & $0$ & $0$ & $1$ & $20\gamma+20$\\\hline
$j=5$ & $0$ & $0$ & $0$ & $0$ & $0$ & $1$\\\hline
\end{tabular}
\ \ \ \ \
\]
}

\begin{center}
\textbf{Table 3: Jacobi-Stirling Numbers of the First Kind}
\end{center}

Like the Stirling numbers of the second kind, the signless Stirling numbers of
the first kind satisfy a triangular recurrence relation (see \cite[p.
214]{Comtet}): for all $n,j\in\mathbb{N}$ we have
\[%
\genfrac{[}{]}{0pt}{}{n}{j}%
=%
\genfrac{[}{]}{0pt}{}{n-1}{j-1}%
+(n-1)%
\genfrac{[}{]}{0pt}{}{n-1}{j}%
.
\]
As we show next, the Jacobi-Stirling numbers of the first kind satisfy a
similar triangular recurrence relation.

\begin{theorem}
For all $n,j\in\mathbb{N}_{0}$ we have
\begin{equation}%
\genfrac{[}{]}{0pt}{}{n}{0}%
_{\gamma}=\delta_{n,0}\hspace{20pt}\text{and}\hspace{20pt}%
\genfrac{[}{]}{0pt}{}{0}{j}%
_{\gamma}=\delta_{j,0}, \label{eqn:JS1initial}%
\end{equation}
and for all $n,j\in\mathbb{N}$ we have
\begin{equation}%
\genfrac{[}{]}{0pt}{}{n}{j}%
_{\gamma}=%
\genfrac{[}{]}{0pt}{}{n-1}{j-1}%
_{\gamma}+(n-1)(n+2\gamma-2)%
\genfrac{[}{]}{0pt}{}{n-1}{j}%
_{\gamma}. \label{eqn:JS1recurrence}%
\end{equation}

\end{theorem}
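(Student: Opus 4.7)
The plan is to derive the recurrence directly from the defining identity
$$\langle x\rangle_n^{(\gamma)}=\sum_{j=0}^n(-1)^{n+j}\genfrac{[}{]}{0pt}{}{n}{j}_\gamma x^j,$$
by exploiting the one-step factorization of the generalized falling factorial. Specifically, from the definition in (\ref{eqn:jsfalling}) one has, for $n\ge 1$,
$$\langle x\rangle_n^{(\gamma)}=\langle x\rangle_{n-1}^{(\gamma)}\bigl(x-(n-1)(n+2\gamma-2)\bigr),$$
since the last factor in the product corresponds to $m=n-1$. This is the only algebraic identity needed; the rest is bookkeeping on polynomial coefficients.

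First, I would handle the initial conditions. For $n=0$, the empty product gives $\langle x\rangle_0^{(\gamma)}=1$, which forces $\genfrac{[}{]}{0pt}{}{0}{0}_\gamma=1$ and $\genfrac{[}{]}{0pt}{}{0}{j}_\gamma=0$ for $j\ge 1$. For $n\ge 1$, the factor with $m=0$ in $\prod_{m=0}^{n-1}(x-m(m+2\gamma-1))$ is simply $x$, so $\langle x\rangle_n^{(\gamma)}$ has zero constant term and hence $\genfrac{[}{]}{0pt}{}{n}{0}_\gamma=0$. This establishes (\ref{eqn:JS1initial}).

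Next, I would substitute the expansion (\ref{eqn:JS1 poly}) for $\langle x\rangle_{n-1}^{(\gamma)}$ into the factorization above, obtaining
\begin{align*}
\langle x\rangle_n^{(\gamma)}
&=\sum_{j=0}^{n-1}(-1)^{n-1+j}\genfrac{[}{]}{0pt}{}{n-1}{j}_\gamma x^{j+1}
-(n-1)(n+2\gamma-2)\sum_{j=0}^{n-1}(-1)^{n-1+j}\genfrac{[}{]}{0pt}{}{n-1}{j}_\gamma x^{j}\\
&=\sum_{j=1}^{n}(-1)^{n+j}\genfrac{[}{]}{0pt}{}{n-1}{j-1}_\gamma x^{j}
+(n-1)(n+2\gamma-2)\sum_{j=0}^{n-1}(-1)^{n+j}\genfrac{[}{]}{0pt}{}{n-1}{j}_\gamma x^{j},
\end{align*}
where in the first sum I shifted $j\mapsto j-1$ (noting $(-1)^{n-1+(j-1)+1}=(-1)^{n+j}$) and in the second I absorbed the minus sign into the power of $-1$. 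Matching the coefficient of $x^j$ with the expansion of $\langle x\rangle_n^{(\gamma)}$ and dividing out $(-1)^{n+j}$ yields (\ref{eqn:JS1recurrence}).

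No step is really an obstacle here; the recurrence follows from a one-term factorization of $\langle x\rangle_n^{(\gamma)}$, and the only thing to be careful about is the sign bookkeeping and the re-indexing. The proof is entirely analogous to the derivation of the triangular recurrence for the classical signless Stirling numbers of the first kind from $(x)_n=(x)_{n-1}(x-(n-1))$, with $(n-1)$ replaced by the eigenvalue-style factor $(n-1)(n+2\gamma-2)$ that arises naturally from the generalized falling factorial.
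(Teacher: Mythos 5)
Your proof is correct and follows essentially the same route as the paper: the initial conditions are read off from $\langle x\rangle_0^{(\gamma)}=1$ and the divisibility of $\langle x\rangle_n^{(\gamma)}$ by $x$, and the recurrence comes from expanding $\langle x\rangle_n^{(\gamma)}=(x-(n-1)(n+2\gamma-2))\langle x\rangle_{n-1}^{(\gamma)}$ and equating coefficients of $x^j$. (Minor typo: your parenthetical should read $(-1)^{n-1+(j-1)}=(-1)^{n+j}$, without the extra $+1$; the displayed computation already has the correct sign.)
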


\begin{proof}
Line (\ref{eqn:JS1initial}) is immediate from (\ref{eqn:JS1 poly}), since
$\langle x \rangle_{0}^{(\gamma)} = 1$ and for all $j \in\mathbb{N}$ the
polynomial $\langle x \rangle_{j}^{(\gamma)}$ is divisible by $x$.

To obtain (\ref{eqn:JS1recurrence}), first note that the coefficient of
$x^{j}$ on the right side of (\ref{eqn:JS1 poly}) is $(-1)^{j+n}
\genfrac{[}{]}{0pt}{}{n}{j}%
_{\gamma}$. Now observe that if $n \in\mathbb{N}$ then
\begin{align*}
\langle x \rangle_{n}^{(\gamma)}  &  = (x-(n-1)(n+2\gamma-2)) \langle x
\rangle_{n-1}^{(\gamma)}\\
&  = (x - (n-1)(n+2\gamma-2)) \sum_{j=0}^{n-1} (-1)^{n-1+j}
\genfrac{[}{]}{0pt}{}{n-1}{j}%
_{\gamma}x^{j},
\end{align*}
so the coefficient of $x^{j}$ on the left side of (\ref{eqn:JS1 poly}) is
\[
(-1)^{j+n} (n-1)(n+2\gamma-2)%
\genfrac{[}{]}{0pt}{}{n-1}{j}%
_{\gamma}+ (-1)^{j+n}
\genfrac{[}{]}{0pt}{}{n-1}{j-1}%
_{\gamma}.
\]
We obtain (\ref{eqn:JS1recurrence}) when we equate these two expressions for
the coefficient of $x^{j}$.
\end{proof}

Next in this section, we prove a reciprocity result which connects
Jacobi-Stirling numbers of the two kinds. To state this result, first observe
that there is a unique collection $%
\genfrac{[}{]}{0pt}{}{n}{j}%
_{\gamma}\ (n,j\in\mathbb{Z)}$ of polynomials in $\gamma$ satisfying the
initial condition
\begin{equation}%
\genfrac{[}{]}{0pt}{}{n}{0}%
_{\gamma}=\delta_{n,0},\hspace{50pt}%
\genfrac{[}{]}{0pt}{}{0}{j}%
_{\gamma}=\delta_{j,0}, \label{eqn:initial1}%
\end{equation}
and recurrence relation
\begin{equation}%
\genfrac{[}{]}{0pt}{}{n}{j}%
_{\gamma}=%
\genfrac{[}{]}{0pt}{}{n-1}{j-1}%
_{\gamma}+(n-1)(n+2\gamma-2)%
\genfrac{[}{]}{0pt}{}{n-1}{j}%
_{\gamma},\hspace{30pt}(n,k\in{\mathbb{Z}}). \label{eqn:recurrence1}%
\end{equation}
Moreover, these polynomials are the Jacobi-Stirling numbers of the first kind
when $n,j\in\mathbb{N}_{0}$. Similarly, there is a unique collection $%
\genfrac{\{}{\}}{0pt}{}{n}{j}%
_{\gamma}\ (n,j\in\mathbb{Z)}$ of polynomials in $\gamma$ satisfying the
initial condition
\begin{equation}%
\genfrac{\{}{\}}{0pt}{}{n}{0}%
_{\gamma}=\delta_{n,0},\hspace{50pt}%
\genfrac{\{}{\}}{0pt}{}{0}{j}%
_{\gamma}=\delta_{j,0}, \label{eqn:initial2}%
\end{equation}
and recurrence relation
\begin{equation}%
\genfrac{\{}{\}}{0pt}{}{n}{j}%
_{\gamma}=%
\genfrac{\{}{\}}{0pt}{}{n-1}{j-1}%
_{\gamma}+j(j+2\gamma-1)%
\genfrac{\{}{\}}{0pt}{}{n-1}{j}%
_{\gamma},\hspace{30pt}(n,k\in{\mathbb{Z}}). \label{eqn:recurrence2}%
\end{equation}
Moreover, these polynomials are the Jacobi-Stirling numbers of the second kind
when $n,j\in\mathbb{N}_{0}$. It is not difficult to show that if $n\neq0$ and
$j\neq0$ differ in sign then $%
\genfrac{[}{]}{0pt}{}{n}{j}%
_{\gamma}=%
\genfrac{\{}{\}}{0pt}{}{n}{j}%
_{\gamma}=0$, but we might hope that for various negative $n$ and $j$ we
obtain interesting new polynomials $%
\genfrac{[}{]}{0pt}{}{n}{j}%
_{\gamma}$ and $%
\genfrac{\{}{\}}{0pt}{}{n}{j}%
_{\gamma}$. Our reciprocity result, which is an analogue of a similar result
\cite[Line (2.4)]{Knuth} for classical Stirling numbers, shows that we
actually obtain two familiar families of polynomials.

\begin{theorem}
For all $n,j \in\mathbb{Z}$ we have
\begin{equation}
\label{eqn:JSrecip}%
\genfrac{\{}{\}}{0pt}{}{-j}{-n}%
_{\gamma}= (-1)^{n+j}
\genfrac{[}{]}{0pt}{}{n}{j}%
_{1-\gamma}%
\end{equation}

\end{theorem}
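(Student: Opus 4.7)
The plan is to exploit the uniqueness of the two-sided extensions of $\genfrac{\{}{\}}{0pt}{}{n}{j}_\gamma$ and $\genfrac{[}{]}{0pt}{}{n}{j}_\gamma$ recorded in the paper just above the theorem: any doubly-indexed family of polynomials in $\gamma$ on $\mathbb{Z}^2$ satisfying the appropriate boundary data on the coordinate axes together with the appropriate two-term recurrence is uniquely determined. I set
\[
A(n,j) := \genfrac{\{}{\}}{0pt}{}{-j}{-n}_\gamma, \qquad B(n,j) := (-1)^{n+j}\genfrac{[}{]}{0pt}{}{n}{j}_{1-\gamma},
\]
and my aim is to show that $A$ and $B$ satisfy the same initial conditions on the axes and the same recurrence throughout $\mathbb{Z}^2$; the uniqueness will then force $A \equiv B$, which is exactly (\ref{eqn:JSrecip}).

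For the initial conditions, (\ref{eqn:initial2}) gives $A(n,0) = \genfrac{\{}{\}}{0pt}{}{0}{-n}_\gamma = \delta_{n,0}$ and $A(0,j) = \genfrac{\{}{\}}{0pt}{}{-j}{0}_\gamma = \delta_{j,0}$, while (\ref{eqn:initial1}) gives $B(n,0) = (-1)^n \delta_{n,0} = \delta_{n,0}$ and $B(0,j) = (-1)^j \delta_{j,0} = \delta_{j,0}$, so $A$ and $B$ agree on the two coordinate axes.

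For the recurrence, substituting $(N,J) = (-j,-n)$ into (\ref{eqn:recurrence2}), using $(-n)(-n+2\gamma-1) = n(n-2\gamma+1)$, and then relabeling $n \mapsto n-1$, $j \mapsto j-1$ yields after a mild rearrangement
\[
A(n,j) = A(n-1,j-1) - (n-1)(n-2\gamma)A(n-1,j).
\]
On the other hand, applying (\ref{eqn:recurrence1}) at parameter $1-\gamma$ and using the arithmetic identity $n + 2(1-\gamma) - 2 = n - 2\gamma$ gives
\[
\genfrac{[}{]}{0pt}{}{n}{j}_{1-\gamma} = \genfrac{[}{]}{0pt}{}{n-1}{j-1}_{1-\gamma} + (n-1)(n-2\gamma)\genfrac{[}{]}{0pt}{}{n-1}{j}_{1-\gamma}.
\]
Multiplying through by $(-1)^{n+j}$ and using $(-1)^{n+j} = (-1)^{(n-1)+(j-1)}$ together with $(-1)^{n+j} = -(-1)^{(n-1)+j}$ turns this into
\[
B(n,j) = B(n-1,j-1) - (n-1)(n-2\gamma)B(n-1,j),
\]
which is the same recurrence.

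The conclusion now follows from the uniqueness principle. The only genuine piece of bookkeeping, which I expect to be the main obstacle, is the interaction between the index substitution $(n,j)\mapsto(-j,-n)$ on the left-hand side of (\ref{eqn:JSrecip}) and the sign $(-1)^{n+j}$ combined with the parameter flip $\gamma \mapsto 1-\gamma$ on the right: the minus sign in the common recurrence arises from the parity mismatch $(-1)^{(n-1)+j} = -(-1)^{n+j}$ when only the first index is reduced, and the identity $n + 2(1-\gamma) - 2 = n - 2\gamma$ is precisely what makes the parameter flip produce the same coefficient as emerges from the index substitution on the other side. Once these two pieces of bookkeeping line up, the result drops out of the uniqueness statement.
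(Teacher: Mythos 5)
Your proposal is correct and follows essentially the same route as the paper: verify the axis initial conditions and a two-term recurrence for both sides, then invoke uniqueness. The paper organizes the computation slightly differently---it shows directly that $L(n,j)=(-1)^{n+j}\genfrac{[}{]}{0pt}{}{-j}{-n}_{1-\gamma}$ satisfies (\ref{eqn:initial2}) and (\ref{eqn:recurrence2}) and quotes the uniqueness of the second-kind family, whereas you transport both sides into a common ``tilted'' recurrence, so you should add the one-line remark that uniqueness for that transported recurrence follows from the paper's uniqueness claim for (\ref{eqn:recurrence2}) via the reindexing $(n,j)\mapsto(-j,-n)$; otherwise the two arguments are the same computation.
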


\begin{proof}
The Jacobi-Stirling numbers of the second kind are uniquely determined by
(\ref{eqn:initial2}) and (\ref{eqn:recurrence2}), so it is sufficient to show
that the quantities $L(n,j) = (-1)^{n+j}
\genfrac{[}{]}{0pt}{}{-j}{-n}%
_{1-\gamma}$ also satisfy (\ref{eqn:initial2}) and (\ref{eqn:recurrence2}).

The fact that $L(n,j)$ satisfies (\ref{eqn:initial2}) is immediate from
(\ref{eqn:initial1}), so suppose $n\neq0$ and $j\neq0$. Then we have
\begin{align*}
L(n-1,j-1)  &  =(-1)^{n+j}%
\genfrac{[}{]}{0pt}{}{-j+1}{-n+1}%
_{1-\gamma}\\
&  =(-1)^{n+j}%
\genfrac{[}{]}{0pt}{}{-j}{-n}%
_{1-\gamma}+(-1)^{n+j}(-j)(-j+1+2(1-\gamma)-2)%
\genfrac{[}{]}{0pt}{}{-j}{-n+1}%
_{1-\gamma}\\
&  =L(n,j)-j(j+2\gamma-1)L(n-1,j),
\end{align*}
and the result follows.
\end{proof}

Concluding this section, we now turn our attention to the unimodality of the
Jacobi-Stirling numbers of the first and second kinds. Recall that a sequence
of real numbers $\{x_{n}\}_{n=0}^{\infty}$ is \textit{unimodal }if there
exists two integers\textit{ }$N_{1}\leq N_{2}$ such that (i) $k\leq N_{1}-2$
implies $a_{k}\leq a_{k+1},$ (ii) $a_{N_{1}-1}<a_{N_{1}}=\cdots=a_{N_{2}%
}>a_{N_{2}+1},$ and (iii) $k\geq N_{2}+1$ implies $a_{k}\geq a_{k+1}.$

As in \cite[Section 5.7]{Andrews-Gawronski-Littlejohn}, we consider the
horizontal generating function%
\begin{equation}
A_{n}(x)=\sum_{j=0}^{n}%
\genfrac{\{}{\}}{0pt}{}{n}{j}%
_{\gamma}x^{j}\quad(n\in\mathbb{N}_{0}) \label{A_n(x)}%
\end{equation}
which, by Theorem \ref{Properties of the Jacobi-Stirling numbers} (iii), is a
polynomial of degree $n$ with leading coefficient $%
\genfrac{\{}{\}}{0pt}{}{n}{n}%
_{\gamma}=1.$ Again, by this property, we find that $A_{0}(x)=1$ and, for
$n\geq1,$%
\begin{align*}
A_{n}(x)  &  =\sum_{j=1}^{n}%
\genfrac{\{}{\}}{0pt}{}{n-1}{j-1}%
_{\gamma}x^{j}+\sum_{j=1}^{n}j(j+2\gamma-1)%
\genfrac{\{}{\}}{0pt}{}{n-1}{j}%
_{\gamma}x^{j}\\
&  =x(A_{n-1}(x)+2\gamma A_{n-1}^{\prime}(x)+xA_{n-1}^{\prime\prime}(x)).
\end{align*}
For example, we see that%
\[
A_{1}(x)=x,\;A_{2}(x)=x(x+2\gamma),\;A_{3}(x)=x(x^{2}+2(3\gamma+1)x+4\gamma
^{2}).
\]
Since $\gamma>0,$ $A_{1},$ $A_{2},$ and $A_{3}$ are real polynomials, the
zeros of which are all real, simple, and non-positive. Along the same lines as
the proof of Theorem 5.7 in \cite{Andrews-Gawronski-Littlejohn}, we obtain

\begin{theorem}
\label{Theorem 5.3}Let $n\in\mathbb{N}.$ The zeros of $A_{n}$, defined in
$($\ref{A_n(x)}$),$ are real, simple and non-positive. Moreover, $A_{n}(0)=0.$
\end{theorem}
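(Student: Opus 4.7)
The plan is to prove Theorem 5.3 by induction on $n$, using the recurrence $A_n(x) = x\bigl(A_{n-1}(x) + 2\gamma A_{n-1}'(x) + x A_{n-1}''(x)\bigr)$ displayed immediately before the statement. The base cases $n=1,2,3$ are verified directly from the explicit expressions for $A_1,A_2,A_3$, whose zeros are visibly real, simple, and non-positive (as the paper already notes). For the inductive step I would assume that $A_{n-1}$ has $n-1$ real simple non-positive zeros $0=\sigma_0>\sigma_1>\cdots>\sigma_{n-2}$. Writing $A_n(x)=xB_{n-1}(x)$ with
\[
B_{n-1}(x):=A_{n-1}(x)+2\gamma A_{n-1}'(x)+xA_{n-1}''(x),
\]
a monic polynomial of degree $n-1$, the identity $A_n(0)=0$ is immediate, so it suffices to show that $B_{n-1}$ has $n-1$ real simple strictly negative zeros.

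The method is a sign-change count for $B_{n-1}$ at the points $\sigma_0,\sigma_1,\ldots,\sigma_{n-2}$ and at $-\infty$. Using the value $A_{n-1}'(0)=(2\gamma)^{n-2}$ implied by Example 2, I obtain $B_{n-1}(0)=2\gamma A_{n-1}'(0)=(2\gamma)^{n-1}>0$, while monicity of degree $n-1$ gives $\operatorname{sign}B_{n-1}(-\infty)=(-1)^{n-1}$. For $i\ge 1$, since $A_{n-1}(\sigma_i)=0$, applying the logarithmic-derivative identity $A_{n-1}''(\sigma_i)/A_{n-1}'(\sigma_i)=2\sum_{k\neq i}1/(\sigma_i-\sigma_k)$ valid at a simple zero yields
\[
B_{n-1}(\sigma_i)=2A_{n-1}'(\sigma_i)\left[\gamma+\sigma_i\sum_{k\neq i}\frac{1}{\sigma_i-\sigma_k}\right].
\]
Because the zeros of $A_{n-1}$ are simple and strictly decreasing, $\operatorname{sign}A_{n-1}'(\sigma_i)=(-1)^i$; granted positivity of the bracket, $\operatorname{sign}B_{n-1}(\sigma_i)=(-1)^i$, so the sign sequence $+,-,+,\ldots$ at $\sigma_0,\ldots,\sigma_{n-2}$ followed by $(-1)^{n-1}$ at $-\infty$ produces $n-1$ sign changes. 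The intermediate value theorem then places one zero of $B_{n-1}$ in each of the $n-2$ intervals $(\sigma_{i+1},\sigma_i)$ for $i=0,\ldots,n-3$ and one in $(-\infty,\sigma_{n-2})$; since $\deg B_{n-1}=n-1$, these are all of its zeros, and they are simple and strictly negative, completing the induction.

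The main obstacle is proving the positivity of the bracket $\gamma+\sigma_i\sum_{k\neq i}1/(\sigma_i-\sigma_k)$ for each $i\ge1$. Extracting the $k=0$ summand, which equals $\sigma_i/\sigma_i=1$, the inequality becomes $\gamma+1+\sigma_i\sum_{k\neq 0,i}(\sigma_i-\sigma_k)^{-1}>0$. Setting $B_{n-2}(x)=A_{n-1}(x)/x$, whose zeros are precisely $\sigma_1,\ldots,\sigma_{n-2}$ by the inductive hypothesis, the remaining sum equals $B_{n-2}''(\sigma_i)/\bigl(2B_{n-2}'(\sigma_i)\bigr)$, so the claim is a lower bound on the logarithmic second derivative of $B_{n-2}$ at each of its simple negative zeros. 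Following the Legendre-Stirling argument of \cite[Theorem 5.7]{Andrews-Gawronski-Littlejohn}, this bound is propagated through the induction by exploiting the alternative recurrence
\[
B_{n-1}(x)=(x+2\gamma)B_{n-2}(x)+2(\gamma+1)xB_{n-2}'(x)+x^2B_{n-2}''(x),
\]
obtained by substituting $A_{n-1}=xB_{n-2}$ into the definition of $B_{n-1}$, together with the strict interlacing of zeros from the previous inductive step.
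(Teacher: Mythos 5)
Your overall strategy --- induction on $n$ via the recurrence $A_n(x)=x\bigl(A_{n-1}(x)+2\gamma A_{n-1}'(x)+xA_{n-1}''(x)\bigr)$ followed by a sign-change count for $B_{n-1}$ at the zeros $\sigma_0=0>\sigma_1>\cdots>\sigma_{n-2}$ of $A_{n-1}$ and at $-\infty$ --- is the same one the paper points to (the paper itself gives no details, deferring to the proof of Theorem 5.7 of \cite{Andrews-Gawronski-Littlejohn}), and the bookkeeping you do carry out is correct: $B_{n-1}$ is monic of degree $n-1$, $B_{n-1}(0)=(2\gamma)^{n-1}>0$, $\operatorname{sign}A_{n-1}'(\sigma_i)=(-1)^i$, and the asserted sign pattern would indeed yield $n-1$ simple negative zeros of $B_{n-1}$. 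But the entire argument hinges on the positivity of the bracket $\gamma+\sigma_i\sum_{k\neq i}(\sigma_i-\sigma_k)^{-1}$, which you explicitly leave unproved, and this is not a routine detail: it does not follow from your inductive hypothesis. For a general monic polynomial with real, simple, non-positive zeros the bracket can be arbitrarily negative --- take zeros $0,\,-1,\,-1-\epsilon$ and evaluate at $\sigma_1=-1$; the bracket equals $\gamma+1-1/\epsilon$. The terms $\sigma_i/(\sigma_i-\sigma_k)$ with $\sigma_k<\sigma_i$ blow up when consecutive zeros are close, so ``real, simple, non-positive'' is too weak a hypothesis to close the induction, and strict interlacing (which your final paragraph proposes to propagate) does not by itself prevent consecutive zeros from being arbitrarily close.

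So the gap is concrete: the one inequality on which the whole sign pattern rests is asserted rather than proved, and the induction as formulated cannot prove it. To repair the argument you would need either to strengthen the inductive hypothesis to something quantitative that bounds $\sum_{k>i}|\sigma_i|/(|\sigma_k|-|\sigma_i|)$ --- for instance a geometric separation $|\sigma_{k+1}|\geq c\,|\sigma_k|$ that you verify the recurrence propagates --- or to replace the pointwise evaluation at the $\sigma_i$ by a Rolle-type argument expressing $f\mapsto f+2\gamma f'+xf''$ through first-order operators with positive weights on $(-\infty,0)$, in the spirit of the classical proof that the Bell polynomials are real-rooted. As written, the proposal reproduces the paper's outline and correctly isolates the hard step, but does not supply the missing proof of that step.
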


For a different proof, see \cite[Theorem 4]{Mongelli1}. From Theorem
\ref{Theorem 5.3}, equation (\ref{eqn:JS1 poly}), and a standard criterion for
unimodality as given in Comtet \cite[p. 270]{Comtet}, we can state the
following result.

\begin{theorem}
The unsigned Jacobi-Stirling numbers of the first kind and the Jacobi-Stirling
numbers of the second kind are unimodal with either a peak or a plateau of $2$ points.
\end{theorem}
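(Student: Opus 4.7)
The plan is to derive unimodality of both sequences (in $j$, for fixed $n$) from the real-rootedness of the associated horizontal generating polynomials, as the remark just preceding the theorem statement already hints. For the second kind, Theorem \ref{Theorem 5.3} directly provides that
\[
A_{n}(x)=\sum_{j=0}^{n}\genfrac{\{}{\}}{0pt}{}{n}{j}_{\gamma}x^{j}
\]
has only real, simple, non-positive zeros. So the first step is to establish the analogous fact for the first-kind generating polynomial
\[
B_{n}(x):=\sum_{j=0}^{n}\genfrac{[}{]}{0pt}{}{n}{j}_{\gamma}x^{j}.
\]

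To handle $B_{n}(x)$, I would start from the defining identity (\ref{eqn:JS1 poly}) together with the product formula $\langle x\rangle_{n}^{(\gamma)}=\prod_{m=0}^{n-1}(x-m(m+2\gamma-1))$ and substitute $x\to -x$. The sign factors collapse cleanly, giving
\[
B_{n}(x)=\prod_{m=0}^{n-1}\bigl(x+m(m+2\gamma-1)\bigr).
\]
Since $\gamma>0$, each number $m(m+2\gamma-1)$ is non-negative, and they are pairwise distinct; hence $B_{n}$ has $n$ real, simple, non-positive zeros (a simple zero at $0$ coming from $m=0$, and simple negative zeros at $-m(m+2\gamma-1)$ for $1\le m\le n-1$). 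In particular the coefficients $\genfrac{[}{]}{0pt}{}{n}{j}_{\gamma}$ are non-negative, consistent with their combinatorial role.

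With real-rootedness in hand for both $A_{n}$ and $B_{n}$, I would invoke the classical Newton inequalities (the standard criterion cited in Comtet, p.~270): for a polynomial $\sum_{k=0}^{n}a_{k}x^{k}$ with non-negative coefficients and only real zeros,
\[
a_{k}^{2}\;\ge\;a_{k-1}a_{k+1}\cdot\frac{(k+1)(n-k+1)}{k(n-k)}\qquad(1\le k\le n-1).
\]
Since the factor $(k+1)(n-k+1)/k(n-k)$ is strictly greater than $1$, this yields strict log-concavity $a_{k}^{2}>a_{k-1}a_{k+1}$ on the positive portion of the sequence, which in turn implies unimodality; moreover, strict log-concavity precludes three consecutive equal coefficients, so the maximum is attained either at a single index (a peak) or at exactly two consecutive indices (a plateau of $2$). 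Applying this to $A_{n}$ and $B_{n}$ gives the theorem.

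The main obstacle is really just the first step: producing the factorization of $B_{n}(x)$. Once that is written down, the rest is a direct appeal to Theorem \ref{Theorem 5.3} and to the standard Newton-inequality criterion, and no further calculation is required. I would simply write out the $x\to -x$ substitution in (\ref{eqn:JS1 poly}) carefully to confirm the sign bookkeeping, and then quote Comtet for the implication ``real-rooted with non-negative coefficients $\Rightarrow$ unimodal with peak or $2$-point plateau.''
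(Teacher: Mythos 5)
Your proposal is correct and follows exactly the route the paper indicates: real-rootedness of $A_n$ from Theorem \ref{Theorem 5.3}, the explicit factorization $\sum_{j}\genfrac{[}{]}{0pt}{}{n}{j}_{\gamma}x^{j}=\prod_{m=0}^{n-1}\bigl(x+m(m+2\gamma-1)\bigr)$ extracted from (\ref{eqn:JS1 poly}), and the Newton-inequality criterion from Comtet, p.~270. The paper leaves all of these steps as a one-line citation, so your write-up is simply a fleshed-out version of the same argument, with the sign bookkeeping done correctly.
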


\section{Two Combinatorial Interpretations of the Jacobi-Stirling Numbers of
the First Kind\label{sec:JS1combin}}

The signless classical Stirling number of the first kind $%
\genfrac{[}{]}{0pt}{}{n}{j}%
$ is the number of permutations of $\{1,2,\ldots,n\}$ with $j$ cycles. With
this in mind, it is natural to ask for a similar combinatorial interpretation
of the Jacobi-Stirling number of the first kind $%
\genfrac{[}{]}{0pt}{}{n}{j}%
_{\gamma}$. Indeed, Egge \cite{Egge} has given a combinatorial interpretation
of the Legendre-Stirling number $%
\genfrac{[}{]}{0pt}{}{n}{j}%
_{1}$ in terms of pairs of permutations; one might call these
\emph{Legendre-Stirling permutation pairs}. More recently, Gelineau and Zeng
\cite{Gelineau-Zeng} have found a statistic on Legendre-Stirling permutation
pairs which allows them to interpret $%
\genfrac{[}{]}{0pt}{}{n}{j}%
_{\gamma}$ as a generating function over these pairs. In this section we
generalize Legendre-Stirling permutation pairs still further, to obtain
objects we will call \emph{Jacobi-Stirling permutation pairs}. These objects
will come in two flavors, balanced and unbalanced. When $\gamma=1$ it will be
clear that both the balanced and the unbalanced Jacobi-Stirling permutation
pairs are in fact Legendre-Stirling permutation pairs. In addition, we will
show that for any positive integer $\gamma$, the Jacobi-Stirling numbers count
both the balanced and the unbalanced Jacobi-Stirling permutation pairs. In
connection with these pairs, it will be useful to note that the \emph{cycle
maxima} of a given permutation are the numbers which are largest in their
cycles. For example, if $\pi=(4,6,1)(9,2,3)(7,8)$ is a permutation in $S_{10}%
$, written in cycle notation, then its cycle maxima are $5,6,8,9,$ and $10$.

\begin{definition}
\label{defn:JSpp} Suppose $n,\gamma\in\mathbb{N}_{0}$. A \emph{balanced
Jacobi-Stirling permutation pair} of length $n$ is an ordered pair $(\pi
_{1},\pi_{2})$ with $\pi_{1}\in S_{n+\gamma}$ and $\pi_{2}\in S_{n+\gamma-1}$
for which the following conditions hold:

\begin{enumerate}
\item $\pi_{1}$ has one more cycle than $\pi_{2}$.

\item The cycle maxima of $\pi_{1}$ which are less than $n + \gamma$ are
exactly the cycle maxima of $\pi_{2}$.

\item For each $k$ which is not a cycle maximum, at least one of $\pi_{1}(k)$
and $\pi_{2}(k)$ is less than or equal to $n$.
\end{enumerate}
\end{definition}

In \cite{Egge} Egge defines a Legendre-Stirling permutation pair of length $n$
to be an ordered pair $(\pi_{1},\pi_{2})$ with $\pi_{1} \in S_{n+1}$ and
$\pi_{2} \in S_{n}$, such that $\pi_{1}$ has one more cycle than $\pi_{2}$ and
the cycle maxima of $\pi_{1}$ which are less than $n+1$ are exactly the cycle
maxima of $\pi_{2}$. If $(\pi_{1},\pi_{2})$ is an ordered pair of permutations
with $\pi_{1} \in S_{n+1}$ and $\pi_{2} \in S_{n}$ which satisfies conditions
(1) and (2) of Definition \ref{defn:JSpp}, then the fact that $\pi_{2} \in
S_{n}$ implies $(\pi_{1},\pi_{2})$ also satisfies condition (3). Therefore,
the Legendre-Stirling permutation pairs of length $n$ are exactly the balanced
Jacobi-Stirling permutation pairs of length $n$ with $\gamma= 1$. The
Legendre-Stirling permutation pairs are counted by the Legendre-Stirling
numbers of the first kind $%
\genfrac{[}{]}{0pt}{}{n}{j}%
_{1}$; as we show next, the balanced Jacobi-Stirling permutation pairs are
counted by the Jacobi-Stirling numbers of the first kind.

\begin{theorem}
\label{thm:balancedJSPPcount} For all $n, j, \gamma\in\mathbb{N}_{0}$, the
number of balanced Jacobi-Stirling permutation pairs $(\pi_{1},\pi_{2})$ of
length $n$ in which $\pi_{1}$ has exactly $\gamma+j$ cycles is $%
\genfrac{[}{]}{0pt}{}{n}{j}%
_{\gamma}$.
\end{theorem}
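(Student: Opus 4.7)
Let $B(n,j,\gamma)$ denote the number of balanced Jacobi-Stirling permutation pairs of length $n$ in which $\pi_1$ has exactly $\gamma+j$ cycles. The plan is to verify that $B$ satisfies the same initial conditions (\ref{eqn:JS1initial}) and triangular recurrence (\ref{eqn:JS1recurrence}) as $\genfrac{[}{]}{0pt}{}{n}{j}_\gamma$; since those uniquely characterize the Jacobi-Stirling numbers of the first kind, the theorem will follow. For the initial conditions, when $n=0$ condition (3) of Definition \ref{defn:JSpp} demands $\min(\pi_1(k),\pi_2(k))\le 0$ for every non-cycle-maximum $k$, which is impossible since images are positive integers; hence every element of the domain must be a cycle maximum, forcing $\pi_1$ and $\pi_2$ to be identity permutations. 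Then $\pi_1\in S_\gamma$ has exactly $\gamma$ cycles, so $B(0,0,\gamma)=1$ and $B(0,j,\gamma)=0$ for $j>0$. For $n\ge 1$ and $j=0$, a pigeonhole argument (with only $\gamma$ cycles in $S_{n+\gamma}$ some cycle has length $\ge 2$, producing a non-cycle-max) combined with conditions (2) and (3) yields $B(n,0,\gamma)=0$.

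For the recurrence, I will decompose length-$n$ pairs into two classes matching the two summands. Class (I) consists of pairs obtained from a length-$(n-1)$ pair $(\sigma_1,\sigma_2)$ with $\gamma+j-1$ cycles in $\sigma_1$ by appending new singleton cycles containing the largest elements $n+\gamma$ in $\pi_1$ and $n+\gamma-1$ in $\pi_2$; this contributes $B(n-1,j-1,\gamma)$. Class (II) consists of pairs obtained from a length-$(n-1)$ pair with $\gamma+j$ cycles in $\sigma_1$ by an ``assimilation'' step that adjoins the new elements without changing the cycle-count difference; the target contribution is $(n-1)(n+2\gamma-2)B(n-1,j,\gamma)$. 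The motivating combinatorial fact is the elementary symmetric polynomial identity
\[
\prod_{m=0}^{n-1}\bigl(x+m(m+2\gamma-1)\bigr)=\sum_{j=0}^{n}\genfrac{[}{]}{0pt}{}{n}{j}_\gamma x^j,
\]
obtained from (\ref{eqn:JS1 poly}) by $x\mapsto -x$; this identity naturally suggests an inductive build-up in which at step $m$ there are $1+m(m+2\gamma-1)$ choices, namely one ``create a new singleton cycle'' option and $m(m+2\gamma-1)$ ``assimilation'' options.

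The principal obstacle will be exhibiting the assimilation step concretely in a way faithful to conditions (1)--(3). A naive ``insert the new largest elements into existing cycles of $\sigma_1$ and $\sigma_2$'' bijection is inadequate: condition (2) forces the insertion cycles of $\sigma_1$ and $\sigma_2$ to share a common cycle maximum (so most raw insertions are disqualified), and some valid length-$n$ pairs do not arise from any insertion into a valid length-$(n-1)$ pair (the underlying length-$(n-1)$ cycle structure need not itself form a balanced pair). The correct construction must interleave insertion with local cycle restructuring, for instance by merging two cycles of $\sigma_1$ while splitting off a new singleton containing $n+\gamma$. The gap between the naive raw count $(n+\gamma-1)(n+\gamma-2)$ and the required count $(n-1)(n+2\gamma-2)$ is precisely $\gamma(\gamma-1)$, indicating how much ``redirection'' the bijection must perform. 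Verifying that the refined enumeration yields exactly $(n-1)(n+2\gamma-2)$ per length-$(n-1)$ pair, guided by the elementary symmetric polynomial identity above, is the crux of the proof.
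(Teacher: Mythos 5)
Your overall strategy---verify the initial conditions (\ref{eqn:JS1initial}) and the triangular recurrence (\ref{eqn:JS1recurrence}) for the count $B(n,j,\gamma)$---is the same as the paper's, and your treatment of the initial conditions is essentially sound (the paper bases its induction at $n=1$ rather than $n=0$, but the content is the same). The problem is that the heart of the matter, producing the factor $(n-1)(n+2\gamma-2)$ combinatorially, is precisely the step you leave undone: you describe the needed ``assimilation'' bijection for your Class (II) as ``the crux of the proof'' and never construct it. As written this is a plan with its essential step missing, and your own analysis explains why it stalls: inducting on the \emph{largest} elements is the wrong axis, since the naive insertion count $(n+\gamma-1)(n+\gamma-2)$ overshoots by $\gamma(\gamma-1)$ and deleting the largest elements from a valid pair need not leave a valid pair.

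The paper inducts on the \emph{smallest} element instead. By condition (2) of Definition \ref{defn:JSpp}, $1$ is a fixed point of $\pi_1$ if and only if it is a fixed point of $\pi_2$. Pairs in which $1$ is fixed correspond, after deleting $1$ and decrementing, to pairs of length $n-1$ in which $\pi_1$ has one fewer cycle, giving the $\genfrac{[}{]}{0pt}{}{n-1}{j-1}_{\gamma}$ term. Pairs in which $1$ is not fixed are built from a length-$(n-1)$ pair by incrementing every entry and inserting $1$ after one entry of each permutation; since $1$ is then a non-cycle-maximum, condition (3) forces $\pi_1(1)\le n$ or $\pi_2(1)\le n$, and counting admissible insertion positions gives exactly $(n-1)(n+\gamma-2)+\gamma(n-1)=(n-1)(n+2\gamma-2)$. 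That change of viewpoint is the single idea your proposal is missing, and adopting it would let you discard the Class (II) machinery entirely. I will add one caveat that applies to both routes: one must also check that deleting $1$ from a valid length-$n$ pair yields a pair still satisfying condition (3) at every remaining non-cycle-maximum, because the inserted $1$ can itself ``rescue'' condition (3) at the entry $k$ with $\pi_i(k)=1$. The paper does not address this surjectivity point, and it is not automatic: for $\gamma=2$, $n=2$, the pair $\pi_1=(1,3,2)(4)$, $\pi_2=(1,2,3)$ satisfies conditions (1)--(3) as literally stated with $j=0$, yet deletes to an invalid length-$1$ pair, so either the definition or the deletion step needs adjustment. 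In any case, to complete your write-up you must either switch to the smallest-element recursion or actually exhibit the restructuring bijection for Class (II); neither is currently done.
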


\begin{proof}
Let $a_{n,j}$ denote the number of balanced Jacobi-Stirling permutation pairs
$(\pi_{1},\pi_{2})$ of length $n$ in which $\pi_{1}$ has exactly $\gamma+j$ cycles.

It follows from (\ref{eqn:initial1}) and (\ref{eqn:recurrence1}) that $%
\genfrac{[}{]}{0pt}{}{1}{j}%
_{\gamma}= \delta_{j,1}$. On the other hand, if $(\pi_{1},\pi_{2})$ is a
balanced Jacobi-Stirling permutation pair of length $1$ in which $\pi_{1}$ has
exactly $\gamma+j$ cycles, then we must have $j \le1$, since $\pi_{1} \in
S_{1+\gamma}$. If $j = 0$ then some entry of $\pi_{1}$ violates Definition
\ref{defn:JSpp}(3), so we must have $j = 1$. Moreover, when $j = 1$ both
$\pi_{1}$ and $\pi_{2}$ must be the identity permutation, so $a_{1,j} =
\delta_{j,1}$. Therefore the result holds for $n = 1$.

Now suppose $n > 1$ and the result holds for $n - 1$; we argue by induction on
$n$. To obtain $a_{n,j}$, first observe that by condition (2) of Definition
\ref{defn:JSpp}, if $(\pi_{1},\pi_{2})$ is a balanced Jacobi-Stirling
permutation pair of length $n$ then 1 is a fixed point in $\pi_{1}$ if and
only if it is a fixed point in $\pi_{2}$. Pairs $(\pi_{1},\pi_{2})$ in which 1
is a fixed point are in bijection with pairs $(\sigma_{1},\sigma_{2})$ of
length $n-1$ in which $\sigma_{1}$ has $j-1+\gamma$ cycles by removing the 1
from each permutation and decreasing all other entries by 1. Each pair
$(\pi_{1},\pi_{2})$ in which 1 is not a fixed point may be constructed
uniquely by choosing a pair $(\sigma_{1},\sigma_{2})$ of length $n-1$ in which
$\sigma_{1}$ has $j+\gamma$ cycles, increasing each entry of each permutation
by 1, and inserting 1 after an entry of each permutation. There are
$a_{n-1,j}$ pairs $(\sigma_{1},\sigma_{2})$, there are $(n-1)(n+\gamma-2)$
ways to insert 1 so that $\pi_{1}(1) \le n$, and there are $\gamma(n-1)$ ways
to insert 1 so that $\pi_{1}(1) > n$. Combining these observations and using
induction to eliminate $a_{n-1,j-1}$ and $a_{n-1,k}$ we find
\begin{align*}
a_{n,j}  &  =
\genfrac{[}{]}{0pt}{}{n-1}{j-1}%
_{\gamma}+ (n-1)(n+2\gamma-2)%
\genfrac{[}{]}{0pt}{}{n-1}{j}%
_{\gamma}\\
&  =
\genfrac{[}{]}{0pt}{}{n}{j}%
_{\gamma},
\end{align*}
as desired.
\end{proof}

We have shown that the Jacobi-Stirling numbers of the first kind count
balanced Jacobi-Stirling permutation pairs whenever $\gamma$ is an integer,
but the orthogonal polynomials that give rise to these numbers include
interesting special cases in which $\gamma$ is a half integer. Most notably,
the Tchebycheff polynomials of the first kind occur when $\gamma= \frac12$,
while the Tchebycheff polynomials of the second kind occur when $\gamma=
\frac32$. To address the combinatorics of the case in which $2\gamma$ is an
integer, we introduce unbalanced Jacobi-Stirling permutation pairs, and we
show that these pairs are also counted by the Jacobi-Stirling numbers of the
first kind.

\begin{definition}
\label{defn:unbalancedJSpp} Suppose $n,2\gamma\in\mathbb{N}_{0}$. An
\emph{unbalanced Jacobi-Stirling permutation pair} of length $n$ is an ordered
pair $(\pi_{1},\pi_{2})$ with $\pi_{1} \in S_{n+2\gamma-1}$ and $\pi_{2} \in
S_{n}$ for which the following hold.

\begin{enumerate}
\item $\pi_{1}$ has $2\gamma-1$ more cycles than $\pi_{2}$.

\item The cycle maxima of $\pi_{1}$ which are less than $n + 1$ are exactly
the cycle maxima of $\pi_{2}$.
\end{enumerate}
\end{definition}

It's not difficult to see that when $\gamma= 1$ the unbalanced Jacobi-Stirling
permutation pairs are exactly the Legendre-Stirling permutation pairs
introduced by Egge, which are counted by the Legendre-Stirling numbers of the
first kind $%
\genfrac{[}{]}{0pt}{}{n}{j}%
_{1}$. As we show next, the unbalanced Jacobi-Stirling permutation pairs are
counted by the Jacobi-Stirling numbers of the first kind. We note that this
result is a special case of a result proved independently by Mongelli
\cite{Mongelli2}.

\begin{theorem}
\label{thm:JS1unbalanced} For all $n, j, \gamma\in\mathbb{N}_{0}$, the number
of unbalanced Jacobi-Stirling permutation pairs $(\pi_{1},\pi_{2})$ of length
$n$ in which $\pi_{2}$ has exactly $j$ cycles is $%
\genfrac{[}{]}{0pt}{}{n}{j}%
_{\gamma}$.
\end{theorem}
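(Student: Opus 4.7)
The plan is to mirror the template of Theorem \ref{thm:balancedJSPPcount}: let $b_{n,j}$ denote the number of unbalanced Jacobi-Stirling permutation pairs of length $n$ in which $\pi_2$ has exactly $j$ cycles, and show by induction on $n$ that $b_{n,j}$ satisfies the initial conditions (\ref{eqn:initial1}) and the triangular recurrence (\ref{eqn:recurrence1}), which together uniquely characterize $\genfrac{[}{]}{0pt}{}{n}{j}_\gamma$.

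For the base case, when $n=0$ the permutation $\pi_2$ is empty, forcing $j=0$, and then condition (1) of Definition \ref{defn:unbalancedJSpp} forces $\pi_1\in S_{2\gamma-1}$ to have $2\gamma-1$ cycles, so $\pi_1$ is the identity; hence $b_{0,j}=\delta_{j,0}$. When $n>0$ but $j=0$, no $\pi_2\in S_n$ has zero cycles, so $b_{n,0}=0$. These agree with (\ref{eqn:initial1}).

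For the inductive step I split pairs of length $n\ge 1$ according to whether $1$ is a fixed point. The key observation is that condition (2) of Definition \ref{defn:unbalancedJSpp} forces $1$ to be a fixed point of $\pi_1$ if and only if it is a fixed point of $\pi_2$: since $1<n+1$, the cycle maxima of $\pi_1$ below $n+1$ coincide with those of $\pi_2$, and being the smallest element, $1$ is a cycle maximum of a permutation exactly when it is a fixed point there. In the case where both permutations fix $1$, deleting $1$ and applying $k\mapsto k-1$ to the remaining entries gives a bijection onto unbalanced pairs of length $n-1$ whose second coordinate has $j-1$ cycles, contributing $b_{n-1,j-1}$. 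In the case where neither fixes $1$, each such pair arises uniquely from a pair $(\sigma_1,\sigma_2)$ of length $n-1$ with $\sigma_2$ having $j$ cycles, by applying $k\mapsto k+1$ and then inserting $1$ into an existing cycle of each permutation; there are $n+2\gamma-2$ insertion positions in $\sigma_1$ and $n-1$ in $\sigma_2$, yielding $(n-1)(n+2\gamma-2)\,b_{n-1,j}$ pairs. Summing the two contributions gives exactly (\ref{eqn:recurrence1}).

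The main obstacle is verifying that the construction in the second case really produces an unbalanced Jacobi-Stirling permutation pair, i.e.\ that condition (2) of Definition \ref{defn:unbalancedJSpp} is preserved. This is handled by the simple but crucial remark that inserting the smallest element $1$ into an existing cycle leaves the maximum of that cycle (which is at least $2$) unchanged, so the multiset of cycle maxima of $\pi_i$ equals that of $\sigma_i$ after incrementing; the matching of cycle maxima below $n+1$ then follows directly from the inductive hypothesis applied to $(\sigma_1,\sigma_2)$. A minor bookkeeping point, to be checked, is that no additional cases arise from the possibility that inserting $1$ splits a cycle or creates a fixed point in only one of the two permutations — both possibilities are ruled out because insertion into an existing cycle never produces a new cycle.
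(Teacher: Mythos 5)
Your proposal is correct and follows essentially the same route as the paper's proof: induction on $n$ via the fixed-point-of-$1$ dichotomy, with the same bijection in the fixed-point case and the same $(n-1)(n+2\gamma-2)$ insertion count in the other case. The only differences are cosmetic (base case $n=0$ rather than $n=1$, and a slightly more explicit verification that insertion preserves condition (2) of Definition \ref{defn:unbalancedJSpp}).
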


\begin{proof}
Let $a_{n,j}$ denote the number of unbalanced Jacobi-Stirling permutation
pairs $(\pi_{1},\pi_{2})$ of length $n$ in which $\pi_{2}$ has exactly $j$ cycles.

It follows from (\ref{eqn:initial1}) and (\ref{eqn:recurrence1}) that $%
\genfrac{[}{]}{0pt}{}{1}{j}%
_{\gamma}= \delta_{j,1}$. On the other hand, if $(\pi_{1},\pi_{2})$ is an
unbalanced Jacobi-Stirling permutation pair of length $1$ in which $\pi_{2}$
has exactly $j$ cycles, then we must have $j = 1$, since $\pi_{2} \in S_{1}$.
Moreover, when $j = 1$ both $\pi_{1}$ and $\pi_{2}$ must be the identity
permutation, so $a_{1,j} = \delta_{j,1}$. Therefore the result holds for $n =
1$.

Now suppose $n > 1$ and the result holds for $n - 1$; we argue by induction on
$n$. To obtain $a_{n,j}$, first observe that by condition 2 of Definition
\ref{defn:unbalancedJSpp}, if $(\pi_{1},\pi_{2})$ is an unbalanced
Jacobi-Stirling permutation pair of length $n$ then 1 is a fixed point in
$\pi_{1}$ if and only if it is a fixed point in $\pi_{2}$. Pairs $(\pi_{1}%
,\pi_{2})$ in which 1 is a fixed point are in bijection with pairs
$(\sigma_{1},\sigma_{2})$ of length $n-1$ in which $\sigma_{2}$ has $j-1$
cycles by removing the 1 from each permutation and decreasing all other
entries by 1. Each pair $(\pi_{1},\pi_{2})$ in which 1 is not a fixed point
may be constructed uniquely by choosing a pair $(\sigma_{1},\sigma_{2})$ of
length $n-1$ in which $\sigma_{2}$ has $j$ cycles, increasing each entry of
each permutation by 1, and inserting 1 after an entry of each permutation.
There are $a_{n-1,j}$ pairs $(\sigma_{1},\sigma_{2})$, and there are
$(n-1)(n+2\gamma-2)$ ways to insert our 1s. Combining these observations and
using induction to eliminate $a_{n-1,j-1}$ and $a_{n-1,k}$ we find
\begin{align*}
a_{n,j}  &  =
\genfrac{[}{]}{0pt}{}{n-1}{j-1}%
_{\gamma}+ (n-1)(n+2\gamma-2)%
\genfrac{[}{]}{0pt}{}{n-1}{j}%
_{\gamma}\\
&  =
\genfrac{[}{]}{0pt}{}{n}{j}%
_{\gamma},
\end{align*}
as desired.
\end{proof}

We conclude with an example involving unbalanced Jacobi-Stirling permutation pairs.

\begin{example}
In this example we give a direct combinatorial proof that%
\[
{%
\genfrac{[}{]}{0pt}{}{n}{1}%
_{\gamma}=(n-1)!\prod_{j=0}^{n-1}(2\gamma+j)}.
\]
By Theorem \ref{thm:JS1unbalanced}, the quantity $%
\genfrac{[}{]}{0pt}{}{n}{1}%
_{\gamma}$ is the number of unbalanced Jacobi-Stirling permutation pairs
$(\pi_{1},\pi_{2})$ of length $n$, where $\pi_{1}\in S_{n+2\gamma-1}$ has
$2\gamma$ cycles and $\pi_{2}\in S_{n}$ has 1 cycle. Since $\pi_{2}$ has 1
cycle, there are $(n-1)!$ choices for this permutation. Moreover, the cycle
maxima for $\pi_{1}$ must be $n,n+1,\ldots,n+2\gamma-1$. Now we may place 1 in
$\pi_{2}$ in $2\gamma$ ways, we may place 2 in $2\gamma+1$ ways, and in
general we may place $j+1$ in $2\gamma+j$ ways. Thus there are $(n-1)!\prod
_{j=0}^{n-1}(2\gamma+j)$ of these unbalanced Jacobi-Stirling permutation pairs.
\end{example}

\end{document}